\documentclass[10pt,twoside]{amsart}
\usepackage{amsmath,amssymb,amscd,amsfonts} 
\usepackage[all]{xy}
\numberwithin{equation}{section}
\theoremstyle{plain}
\newtheorem{thm}{Theorem}[section]
\newtheorem{prop}[thm]{Proposition}
\newtheorem{lem}[thm]{Lemma}

\newtheorem{rmk}[thm]{Remark}

\newcommand{\F}{\mathbb{F}}
\newcommand{\Fbar}{\overline{\F}}

\newcommand{\Q}{\mathbb{Q}}

\newcommand{\Z}{\mathbb{Z}}

\newcommand{\cM}{\mathcal{M}}

\newcommand{\cO}{\mathcal{O}}

\newcommand{\Hom}{\mathrm{Hom}}

\newcommand{\pr}{\mathrm{pr}}

\newcommand{\Spec}{\mathrm{Spec}\,}

\newcommand{\inj}{\hookrightarrow}

\newcommand{\xto}[1]{\xrightarrow{#1}}

\newcommand{\etale}{{\mathrm{et}}}

\newcommand{\cf}{cf.\,}
%

%

%

\newcommand{\red}{\mathrm{red}}

\newcommand{\CH}{\mathrm{CH}}

\newcommand{\Map}{\mathrm{Map}}
\newcommand{\Coker}{\mathrm{Coker}}
\newcommand{\BM}{\mathrm{BM}}

%

%
%

%

%

%

%

%

%

%

%
%
\title[On two higher Chow groups]{On two higher Chow groups of schemes over a finite field}
\author{Satoshi Kondo}
\address{Institute for the Physics and Mathematics of the 
Universe, The University of Tokyo, 5-1-5 Kashiwa-no-ha, 
Kashiwa-shi Chiba 277-8568, JAPAN}
\email{satoshi.kondo@ipmu.jp}
\author{Seidai Yasuda}
\address{Department of Mathematics, Graduate School of Science,
Osaka University, Toyonaka, Osaka 560-8502, JAPAN}
\email{s-yasuda@math.sci.osaka-u.ac.jp}
\subjclass[2000]{Primary 14F42; Secondary 14F35}
\keywords{higher Chow group, motivic cohomology.}
\begin{document}
\begin{abstract}
Given a separated scheme $X$ of finite type over a finite field,
its higher Chow groups $CH_{-1}(X, 1)$ and $CH_{-2}(X, 3)$ are
computed explicitly.
\end{abstract}
\maketitle

\section{Introduction}\label{sec: Introduction}
Let $\F_q$ be the field of $q$ elements of characteristic $p$.
For a separated scheme $X$ which is essentially 
of finite type over $\Spec\F_q$, 
we define the Borel-Moore motivic
homology group $H^\BM_i(X,\Z(j))$ as
the homology group $H_{i-2j}(z_j(X, \bullet))=\CH_j(X, i-2j)$ 
of Bloch's cycle complex $z_j(X,\bullet)$ 
(\cite[Introduction, p.~267]{Bloch} see also 
\cite[2.5, p.~60]{Ge-Le2} to remove the
condition that $X$ is quasi-projective; see \cite{Levine} for 
the labeling using dimension and not codimension). 
If $j > i$ or $j >\dim\, X$, then
$H^\BM_i(X,\Z(j)) =0$ for trivial reason.
When $X$ is essentially smooth
over $\Spec\F_q$, it coincides with the motivic cohomology
group defined in \cite[Part I, Chapter I, 2.2.7, p.~21]{Levine} or
\cite{Voevodsky} (\cf \cite[Theorem 1.2, p.~300]{Levine2}, 
\cite[Corollary 2, p.~351]{Voevodsky2}).
For an abelian group $M$, we set
$H^\BM_i(X, M(j))=H_{i-2j}(z_j(X,\bullet)\otimes_\Z M)$.

For a scheme $X$, we let $\cO(X)=H^0(X,\cO_X)$.
The aim of this paper is to prove the following theorem.
\begin{thm}\label{prop:AppB_main}
Let $X$ be a connected scheme 
which is separated and of finite type over $\Spec\F_q$.
Then for $j=-1,-2$, the pushforward map 
$$
\alpha_X : 
H^\BM_{-1}(X,\Z(j)) \to H^\BM_{-1}(\Spec\cO(X),\Z(j))
$$ is an isomorphism if $X$ is proper, and 
the group $H^\BM_{-1}(X,\Z(j))$ is zero if $X$ is
not proper.
\end{thm}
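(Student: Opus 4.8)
The plan is to compute $H^\BM_{-1}(X,\Z(j))$ by means of the niveau spectral sequence attached to Bloch's cycle complex, to identify the resulting group with an explicit cokernel of a tame-symbol-type map, and then to evaluate that cokernel by a reciprocity law. First I would reduce to the case where $X$ is reduced: the cycle groups $z_j(X,\bullet)$ depend only on $X_{\red}$, while $\cO(X)$ and $\cO(X_{\red})$ have the same reduction, so neither side of $\alpha_X$ changes. For $X$ proper and connected, $\cO(X)$ is a finite local $\F_q$-algebra whose reduction is the field of constants $\F_{q'}=\cO(X)_{\red}$; for non-proper $X$ we will only need that the source vanishes.

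Filtering the cycle complex by dimension of support yields the niveau spectral sequence
$$
E^1_{p,q}=\bigoplus_{x\in X_{(p)}}H^\BM_{p+q}(\Spec\kappa(x),\Z(j))\Longrightarrow H^\BM_{p+q}(X,\Z(j)),
$$
where $X_{(p)}$ is the set of points whose closure has dimension $p$ and $H^\BM_\bullet(\Spec\kappa(x),\Z(j))$ is the homology of the generic point, a colimit over opens $U\subseteq\overline{\{x\}}$. For such $x$ one has $H^\BM_{p+q}(\Spec\kappa(x),\Z(j))=\CH^{p-j}(\Spec\kappa(x),p+q-2j)$, which vanishes as soon as the codimension index $p-j$ exceeds the simplicial index $p+q-2j$, i.e. for $p>-1-j$ along the antidiagonal $p+q=-1$. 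Tracking that antidiagonal together with its neighbours (to control the $d^1$-differentials) therefore cuts the contributions down to a short Gersten-type complex of Milnor $K$-groups and motivic cohomology groups of the residue fields.

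For $j=-1$ this collapses to the single identification
$$
H^\BM_{-1}(X,\Z(-1))\cong\Coker\Big(\bigoplus_{y\in X_{(1)}}K^M_2(\kappa(y))\xrightarrow{\ \partial\ }\bigoplus_{x\in X_{(0)}}\kappa(x)^\times\Big),
$$
with $\partial$ the sum of tame symbols. Since every $\kappa(x)$ is finite, I would evaluate the cokernel through the norm maps $\kappa(x)^\times\to\F_{q'}^\times$: Weil reciprocity shows the norm annihilates all tame symbols, and for proper $X$ the induced map is an isomorphism onto $\F_{q'}^\times=H^\BM_{-1}(\Spec\cO(X),\Z(-1))$, which one must then identify with $\alpha_X$. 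For $j=-2$ the same mechanism applies, but the surviving complex now involves $K^M_3$ and $K^M_4$ of the function fields of curves and surfaces in $X$ together with $H^1_{\cM}(\kappa(x),\Z(2))\cong K_3(\kappa(x))$ at the closed points; the target of $\alpha_X$ is $H^1_{\cM}(\F_{q'},\Z(2))\cong\Z/((q')^2-1)$, and one needs the higher reciprocity law for indecomposable $K_3$ to evaluate the cohomology of this complex.

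Finally, the proper-versus-non-proper dichotomy I would handle by localization along a compactification $\Xbar\supseteq X$ with boundary $\partial X=\Xbar\setminus X$ of smaller dimension: the long exact sequence relates $H^\BM_{-1}(X,\Z(j))$ to the corresponding groups of $\Xbar$ and $\partial X$, and properness is exactly what makes the reciprocity pairing nondegenerate, so that the constants $\F_{q'}^\times$ (resp. $H^1_{\cM}(\F_{q'},\Z(2))$) survive; when $X$ is not proper the boundary contribution cancels the constants and the group vanishes. The main obstacle is the reciprocity input, and most acutely for $j=-2$: one must show that the Gersten differentials out of $K^M_3$ and $K^M_4$ of the function fields interact with the groups $K_3$ of the finite residue fields so as to leave precisely $K_3$ of the constant field — the higher-dimensional analogue of Weil reciprocity — and then verify that the abstract isomorphism so produced coincides with the geometric pushforward $\alpha_X$.
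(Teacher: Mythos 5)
Your reduction to a niveau (Gersten) spectral sequence is set up correctly for $j=-1$: the only surviving term on the antidiagonal is indeed $E^2_{0,-1}=\Coker\bigl(\bigoplus_{y\in X_{(1)}}K^M_2(\kappa(y))\to\bigoplus_{x\in X_{(0)}}\kappa(x)^\times\bigr)$, and Weil reciprocity does show the norm to $\F_{q'}^\times$ kills the image of $\partial$ when $X$ is proper. But the proof stops exactly where the theorem begins: you must show the induced map on the cokernel is \emph{injective}, i.e.\ that any family $(a_x)$ whose total norm is trivial is a sum of tame symbols. That statement is (a form of) the unramified class field theory of $X$ over a finite field; it is the entire content of Akhtar's theorem for $j=-1$, and you assert it rather than prove it. For $j=-2$ the situation is worse in two ways. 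First, your identification of the relevant complex is off: the group that maps onto $\bigoplus_{x\in X_{(0)}}H^1_{\cM}(\kappa(x),\Z(2))$ under $d^1$ is $\bigoplus_{y\in X_{(1)}}H^{\BM}_0(\kappa(y),\Z(-2))=\bigoplus_y H^2_{\cM}(\kappa(y),\Z(3))$, which is \emph{not} a Milnor $K$-group (rationally it vanishes by Harder, and its torsion is an \'etale cohomology group of the function field via Geisser--Levine); the groups $K^M_3(\kappa(y))$ and $K^M_4(\kappa(z))$ you name live in the slots $E^1_{1,-2}$ and $E^1_{2,-2}$ and need a separate vanishing argument before they can be discarded. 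Second, the ``higher reciprocity law for indecomposable $K_3$'' that you invoke to evaluate the resulting cokernel is precisely the new assertion of the theorem; you explicitly flag it as ``the main obstacle'' and supply no argument. So the proposal is a plausible reduction of the theorem to an unproved reciprocity statement, not a proof.

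For comparison, the paper never confronts a global reciprocity law. It proves the vanishing for non-proper $X$ by induction on dimension and on the number of irreducible components, using localization sequences to shrink to affine smooth curves, where everything is computed from Harder's theorem ($K_j\otimes\Q=0$ for $j\ge 2$), Bloch's formula, and the Geisser--Levine identification of torsion motivic cohomology with \'etale cohomology (plus weight and cohomological-dimension arguments). The proper case is then deduced by a counting argument: pushforward from a single closed point is surjective (Lemma~\ref{lem:norm_surj}), the localization sequence bounds the order of $H^{\BM}_{-1}(X,\Z(j))$ by $\gcd_x(|\kappa(x)|^{-j}-1)$, and a Chebotarev-type statement (Lemma~\ref{lem:normal_gcd}) identifies this gcd with the order of the target of $\alpha_X$; the non-normal case is handled by a separate cocartesian-diagram argument (Lemma~\ref{lem:red_to_normal}). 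If you want to complete your approach you would have to either import the class field theory of Kato--Saito type (and its $K_3$ analogue, which is not in the literature in the form you need), or carry out a reduction to curves of the kind the paper does --- at which point the spectral sequence is no longer buying you anything.
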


Theorem~\ref{prop:AppB_main} is a generalization of 
a theorem of Akhtar 
\cite[Theorem 3.1, p.~285]{Akhtar} where the claim
is proved for $j=-1$ and $X$ smooth projective over $\Spec \F_q$.
Our proof of Theorem~\ref{prop:AppB_main}
is independent of \cite{Akhtar}, and we
do not require a Bertini-type theorem.

If we assume Parshin's  conjecture,
then the statement in the theorem holds for any $j \le -1$. 
Moreover we also obtain 
$H_i^\BM(X, \Z(j))=0$ for
any $i \le -2$ and $j \le -1$.
The method is explained in Section~\ref{sec:Parshin}

We define the \'etale Borel-Moore (not motivic) 
homology with $\Z_\ell$-coefficients,
where $\ell$ is a prime different from $p$,
in Remark~\ref{rmk:BMetale}. 
Then we compute it explicitly,
and deduce that
$H_i^\BM(X, \Z(j))\otimes_\Z \Z_\ell \cong 
H_i^{\BM, \mathrm{et}}(X, \Z_\ell(j))$
in the range $i \le -1$ and $j \le -1$
(using Parshin's conjecture where it is needed for 
the computation of the Borel-Moore motivic homology groups).

The original version of this paper was written without using 
the Bloch-Kato-Milnor conjecture.   
We use it as a theorem of Rost and Voevodsky.
It is used via theorems 
of Geisser and Levine (e.g., \cite[Corollary 1.2, p.56]{Ge-Le2}).

{\bf Acknowledgment}
The first author thanks Shuji Saito and Thomas Geisser 
for valuable comments in the course of revision.
The second author would like to thank Akio Tamagawa for 
helpful suggestions on the proof of a lemma in the earlier version
which no longer exists.
The authors thank Thomas Geisser and Tohru Korita
for pointing out a mistake in a former version of the manuscript.

We thank the referee for numerous 
suggestions which shortened this paper and clarified many things
at many points.

During this research, the first author was supported as a Twenty-First Century COE 
Kyoto Mathematics Fellow and was partially supported by JSPS Grant-in-Aid for 
Scientific Research 17740016. The second author was partially supported by JSPS 
Grant-in-Aid for Scientific Research 16244120, 21540013, and 24540018.

\section{Higher Chow groups of smooth curves over a finite field}
\label{sec:curves}
A curve will mean a scheme of pure dimension one, separated and of finite type over
a field.
The aim of this section is to compute the higher Chow groups
$\CH^i(X,j)$ for a smooth curve $X$ over a finite field in the range $i,j\ge 0$.
\begin{lem}\label{lem:q=0}
Let $X$ be a connected smooth curve 
over a finite field.  Then
\[
\CH^i(X,0)\cong
\left\{
\begin{array}{ll}
\Z,&i=0,\\
\mathrm{Pic}(X), &i=1, \\
0, &i\ge 2.
\end{array}
\right.
\]
\end{lem}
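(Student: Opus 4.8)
The plan is to reduce the computation to that of the classical Chow groups of $X$. By the very definition of Bloch's cycle complex, $\CH^i(X,0)$ is the cokernel of the boundary map $\partial \colon z^i(X,1) \to z^i(X,0)$. Here $z^i(X,0)$ is the group of codimension-$i$ cycles on $X$, and the image of $\partial$ is exactly the subgroup of cycles rationally equivalent to zero (a cycle in $z^i(X,1)$ is a codimension-$i$ subvariety of $X\times\Delta^1$ meeting the two faces properly, and $\partial$ records the difference of its two fibers). This gives a canonical identification $\CH^i(X,0) \cong \CH^i(X)$ with the usual Chow group of codimension-$i$ cycles modulo rational equivalence (\cite[Introduction, p.~267]{Bloch}). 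Thus it suffices to compute the classical Chow groups of the smooth connected curve $X$.

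First I would observe that a connected smooth scheme over a field is integral, so $X$ is an integral curve of dimension one. In codimension $0$ the only subvariety is $X$ itself and there are no rational equivalences in that codimension, giving $\CH^0(X,0) = \Z\cdot[X] \cong \Z$. In codimension $\geq 2$ there are no subvarieties at all, since $\dim X = 1$ forces $z^i(X,0)=0$ for $i\geq 2$; hence $\CH^i(X,0)=0$ in that range. The remaining case $i=1$ carries the content: $\CH^1(X)$ is the group of Weil divisors on $X$ modulo principal divisors, that is, the divisor class group. Since $X$ is smooth its local rings are regular, hence factorial, and the standard comparison identifies the divisor class group with the Picard group $\Pic(X)$ of line bundles. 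This yields $\CH^1(X,0)\cong\Pic(X)$.

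The only point requiring any care is the identification $\CH^1(X)\cong\Pic(X)$, which rests on the local factoriality of the regular local rings of $X$; the cases $i=0$ and $i\geq 2$ are formal. I therefore expect no serious obstacle: the lemma is essentially a repackaging of the identity $\CH^i(X,0)=\CH^i(X)$ together with the standard theory of divisors on a smooth integral curve.
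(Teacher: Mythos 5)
Your proof is correct and follows essentially the same route as the paper, which simply observes that the $\CH^i(X,0)$ are the classical Chow groups, cites the known computation (Bloch, Theorem 6.1), and notes the vanishing for $i\ge 2$ by dimension reasons. You have merely written out the standard details (the identification $\CH^i(X,0)\cong\CH^i(X)$, integrality of a connected smooth curve, and $\CH^1(X)\cong\Pic(X)$ via local factoriality) that the paper leaves to the references.
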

\begin{proof}
These are the classical Chow groups 
and the computation is known.  
For $i \ge 2$, it vanishes by dimension reason.
See also \cite[THEOREM (6.1), p.287]{Bloch}.
\end{proof}

\begin{lem}\label{lem:q ge 2}
Let $j \ge 2$.
Let $X$ be a smooth curve over a finite field
$\F_q$ of characteristic $p$.  Then
we have $\CH^i(X,j)=0$ for $i > j$,
and 
for $i \le j$, the cycle map in
\cite[(1.2), p.56]{Ge-Le2}
gives an isomorphism
\[
\CH^i(X,j)\cong
\bigoplus_{\ell\neq p}
H_{\mathrm{et}}^{2i-j-1}(X,\Q_\ell/\Z_\ell(i)).
\]
The right hand side is zero unless $2i-j=1,2,3$.
If moreover $X$ is affine, the right hand side is zero 
for $2i-j=3$.
\end{lem}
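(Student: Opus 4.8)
The plan is to compute $\CH^i(X,j)$ one prime at a time through the cycle map, and then to reassemble the $\ell$-primary pieces using finite generation. Throughout I may assume $X$ connected and $1\le i\le j$: for $i\le 0$ both sides vanish (the left-hand side for dimension reasons, the right-hand side because the degree $2i-j-1$ is negative), and the case $i>j$ is handled separately below. Write $X_{\Fbar}=X\times_{\F_q}\Fbar$, and recall that for a discrete $\GFq$-module $M$ one has $H^0(\GFq,M)=\Ker(\phi-1)$ and $H^1(\GFq,M)=\Coker(\phi-1)$, where $\phi$ is the Frobenius.

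I would first dispose of the vanishing for $i>j$. When $i\ge j+2$ this is the dimension bound, since Bloch's complex $z^i(X,\bullet)$ is concentrated in degrees $\ge i-1>j$. For the remaining case $i=j+1\ (\ge 3)$ I would use finite coefficients: for $\ell\neq p$ the cycle map of \cite[(1.2)]{Ge-Le2} embeds $\CH^{i}(X,i-1;\Z/\ell^\nu)$ (Beilinson--Lichtenbaum in the boundary degree) into $H_{\mathrm{et}}^{i+1}(X,\mu_{\ell^\nu}^{\otimes i})$, which is zero because a curve over $\F_q$ has \'etale cohomological dimension at most $3<i+1$; the Geisser--Levine description of the $p$-part gives $\CH^{i}(X,i-1;\Z/p^r)\cong H^1_{\mathrm{Zar}}(X,W_r\Omega^i_{X,\log})=0$ as $i>\dim X$. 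Hence $\CH^{i}(X,i-1)$ is divisible, and being finitely generated (see the last paragraph) it vanishes.

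For $i\le j$ I would treat each prime in turn. At $p$, Geisser--Levine gives $\CH^i(X,m;\Z/p^r)\cong H^{i-m}_{\mathrm{Zar}}(X,W_r\Omega^i_{X,\log})$, which vanishes in all degrees occurring here (either $i-m<0$, or $i=j\ge 2$ forces the sheaf to be zero); applying this with $m=j+1$ in the universal coefficient sequence shows $\CH^i(X,j)$ has no $p$-torsion. At $\ell\neq p$, Beilinson--Lichtenbaum gives $\CH^i(X,m;\Z/\ell^\nu)\cong H_{\mathrm{et}}^{2i-m}(X,\mu_{\ell^\nu}^{\otimes i})$ for $m\ge i$, and passing to the colimit in $\nu$ yields $\CH^i(X,m;\Q_\ell/\Z_\ell)\cong H_{\mathrm{et}}^{2i-m}(X,\Q_\ell/\Z_\ell(i))$. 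The key observation is that for $i\ge 2$ every one of these \'etale groups is \emph{finite}: by the Weil bounds all Frobenius eigenvalues on $H^t(X_{\Fbar},\Q_\ell(i))$ have negative weight, so $\phi-1$ is invertible on the associated $\Q_\ell$-spaces, and a snake-lemma computation then makes $H^0(\GFq,-)$ finite and $H^1(\GFq,-)$ zero on the divisible coefficients. Feeding $m=j+1$ into the universal coefficient sequence
\[
0\to \CH^i(X,j+1)\otimes\Q_\ell/\Z_\ell\to H_{\mathrm{et}}^{2i-j-1}(X,\Q_\ell/\Z_\ell(i))\to \CH^i(X,j)\{\ell\}\to 0,
\]
the left-hand divisible term then lies in a finite group and so vanishes, giving $\CH^i(X,j)\{\ell\}\cong H_{\mathrm{et}}^{2i-j-1}(X,\Q_\ell/\Z_\ell(i))$; for $i=1$ this is immediate since the target sits in negative degree.

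To finish, I would glue the local data. The same sequence with $m=j$ shows $\CH^i(X,j)\otimes\Q_\ell/\Z_\ell=0$ for every $\ell\neq p$; combined with finite generation of the higher Chow groups of a smooth curve over a finite field (Quillen, as exposed by Grayson, through the $K$-groups), this forces $\CH^i(X,j)$ to be finite, hence equal to the sum over $\ell\neq p$ of its $\ell$-primary parts, which is the asserted isomorphism. I expect this finiteness to be the real obstacle: the \'etale computations only determine each $\CH^i(X,j)\{\ell\}$ together with $\CH^i(X,j)\otimes\Q_\ell/\Z_\ell$, so without an independent finite generation input a uniquely divisible summand would remain undetected. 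Parts~(3) and~(4) are then a Hochschild--Serre computation: the cohomological dimension confines $2i-j$ to $\{1,2,3\}$, the contribution $H^1(\GFq,H^2(X_{\Fbar},\Q_\ell/\Z_\ell(i)))=\Coker(q^{i-1}-1)$ in degree $2i-j=4$ vanishes because $i\ge 3$ there, and in the affine case $H_{\mathrm{et}}^2(X,\Q_\ell/\Z_\ell(i))=H^1(\GFq,H^1(X_{\Fbar},\Q_\ell/\Z_\ell(i)))$ vanishes by the same Weil/snake-lemma argument.
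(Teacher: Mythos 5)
Your local computations --- the Geisser--Levine comparison at each prime $\ell \neq p$, the vanishing of the $p$-part via $W_r\Omega^i_{X,\log}$, and the cohomological-dimension and weight arguments for the last two assertions --- match the paper's and are sound. The gap is exactly where you flagged it: the reassembly. You invoke ``finite generation of the higher Chow groups of a smooth curve over a finite field (Quillen, as exposed by Grayson, through the $K$-groups),'' but what Quillen and Grayson prove is finite generation of the $K$-groups, and finite generation does not descend from the abutment of the motivic spectral sequence to its $E_2$-terms: a non-finitely-generated $E_2$-entry (a copy of $\Q$, say) can be annihilated by a differential without leaving any trace in $K_*(X)$. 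There is no independent finite generation theorem for $\CH^i(X,j)$ to cite here --- establishing it is essentially equivalent to proving the lemma --- so both your conclusion for $i \le j$ (finite, hence the sum of its $\ell$-primary parts) and your treatment of $i=j+1$ (divisible and finitely generated, hence zero) rest on an unsupported input.

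The repair lies in the very reference you cite. Harder's theorem, in the corrected form \cite[THEOREM~0.5, p.70]{Grayson}, gives $K_j(X)\otimes_\Z\Q=0$ for $j\ge 2$, and Bloch's formula $K_j(X)\otimes_\Z\Q\cong\bigoplus_i\CH^i(X,j)\otimes_\Z\Q$ then shows that $\CH^i(X,j)$ is a \emph{torsion} group. A torsion abelian group is automatically the direct sum of its primary components, so your computations of $\CH^i(X,j)\{\ell\}$ and of the vanishing $p$-part finish the argument with no finiteness hypothesis; likewise, once the group is known to be torsion, the coefficient sequence for $0\to\Z\to\Q\to\Q/\Z\to 0$ identifies $\CH^i(X,j)$ with $H_\cM^{2i-j-1}(X,\Q/\Z(i))$ outright, and the case $i=j+1$ reduces to the vanishing of $H_{\mathrm{et}}^{i}(X,\Q_\ell/\Z_\ell(i))$ for $i\ge 3$ by cohomological dimension and weights. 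This is precisely the paper's route: rational vanishing first, then the identification with $\Q/\Z$-coefficient motivic cohomology, then Geisser--Levine.
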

\begin{proof}
We first note that $\CH^i(X,j)=0$
if $i>j+1$ by dimension reason.
Henceforth we consider the case $i\le j+1$.

Recall Bloch's formula (\cite[THEOREM(9.1), p.296]{Bloch}):
\begin{equation}
\label{eqn:Bloch}
K_j(X)\otimes_\Z \Q \cong 
\bigoplus_{i\ge 0} \CH^i(X,j)\otimes_\Z \Q
\end{equation}
where $K_j$ is the $j$-th algebraic K-group.
Recall also Harder's result (the result 
\cite[3.2.5 Korollar, p.175] {Ha} is
not correctly stated; we refer to \cite[THEOREM 0.5, p.70]{Grayson}
and the remark there for the explanation and the 
corrected statement) 
which implies that
$K_j(X) \otimes_\Z \Q=0$ for $j\ge 2$.
Hence $\CH^i(X,j)$ is torsion for $j \ge 2$.

We recall the definition of motivic cohomology given in 
\cite[Section 2.5, p.60]{Ge-Le2}.
For a smooth scheme $X$ over a field, 
define the cohomological cycle complex by 
$Z^j(X,i)=z^j(X,2j-i)$ where $z^*(-, *)$ is Bloch's cycle complex
(\cite[INTRODUCTION, p.267]{Bloch}, see also \cite[2.2, p.58]{Ge-Le2}).
Then, for an abelian group $A$,  
define $H_\cM^j(X, A(i))=H^i(Z^j(X, *)\otimes_\Z A)$.
We have $H_\cM^{2i-j}(X, \Z(i))=\CH^i(X,j)$.

The exact sequence 
$0\to \Z \to \Q \to \Q/\Z \to 0$ 
gives an exact sequence
\[
\begin{array}{l}
H_\cM^{2i-j-1}(X, \Q(i)) 
\to H_\cM^{2i-j-1}(X, \Q/\Z(i))
\xto{(1)}
H_\cM^{2i-j}(X, \Z(i))
\\
\to
H_\cM^{2i-j}(X, \Q(i)).
\end{array}
\]
The first and the last terms are zero as was remarked above.
The map (1) is hence an isomorphism.

Since we are in the range $i \le j+1$ (equivalently, $2i-j-1 \le i$),
we apply \cite[Corollary 1.2, p.56]{Ge-Le2}
and the computation (\cite[Theorem 8.4, p.491]{Ge-Le1}) 
by Geisser and Levine of $p$-torsion motivic cohomology
to obtain
\[
H_\cM^{2i-j-1}(X, \Q/\Z(i))
\cong
\bigoplus_{\ell\neq p}H_{\mathrm{et}}^{2i-j-1}(X,\Q_\ell/\Z_\ell(i))
\oplus \varinjlim_r H^{i-j-1} (X_\mathrm{Zar}, \nu_r^i).
\]
(We refer to \cite{Ge-Le1} for the definition of 
$H^*(X_\mathrm{Zar}, \nu_r^i)$.)
One can compute the right hand side explicitly.
The $p$-part is zero since we are in the range $i \le j+1$ and $j \ge 2$.

Set $a=2i-j-1$.
Let us show that 
$H_{\mathrm{et}}^a(X, \Q_\ell/\Z_\ell(i))=0$ if $a \ge 3$.
If $a \ge 4$, it follows from the fact that 
the cohomological dimension of a curve over a finite field is 3
(\cite[Expos\'e X, Corollaire~4.3, p.15]{SGA4}).
Suppose $a=3$.
We have an exact sequence
\[
H_{\mathrm{et}}^3(X, \Q_\ell(i))
\to 
H_{\mathrm{et}}^3(X, \Q_\ell/\Z_\ell(i))
\to
H_{\mathrm{et}}^4(X, \Z_\ell(i)).
\]
The third term is zero because of the cohomological dimension reason.
The Hochschild-Serre spectral sequence reads
\[
E_2^{p,q}=H^p_{\mathrm{et}}(\mathrm{Gal}(\overline{\F}_q/\F_q),
H_{\mathrm{et}}^q(\overline{X}, \Q_\ell(i))
\Rightarrow
H_{\mathrm{et}}^{p+q}(X, \Q_\ell(i))
\]
where $\overline{X}=X\times_{\Spec \F_q} \Spec \overline{\F}_q$.
We have $E_2^{0,3}=0$ since 
$H_\mathrm{et}^3(\overline{X}, \Q_\ell(i))=0$.
To show $E_2^{1,2}=0$, note that
the weight of 
$H^2_\mathrm{et}(\overline{X}, \Q_\ell(i))$ 
is $2-2i$.  Since $j \ge 2$ and $a=3$, the weight $2-2i$ 
is nonzero,
hence $E_2^{1,2}=0$.
We have $E_2^{2,1}=0$ because the 
cohomological dimension of 
$\mathrm{Gal}(\overline{\F}_q/\F_q)$ is one.
This proves the claim in this case.

Suppose $a=2$ and $X$ is affine.
We have an exact sequence
\[
H_{\mathrm{et}}^2(X, \Q_\ell(i))
\to 
H_{\mathrm{et}}^2(X, \Q_\ell/\Z_\ell(i))
\to
H_{\mathrm{et}}^3(X, \Z_\ell(i)).
\]
The third term is zero since 
the cohomological dimension 
of an affine curve over a finite field is 2
(\cite[Expos\'e XIV, Th\'eor\`eme~3.1, p.15]{SGA4}).
We use the Hochshild-Serre spectral sequence
as above.
We have $E_2^{0,2}=0$ using the same 
cohomological dimension reason.
Note that the (possible) weights of 
$H^1_{\mathrm{et}}(\overline{X}, \Q_\ell(i))$
are $1-2i$ and $2-2i$.
Since neither of them is zero,
we have $E_2^{1,1}=0$.
These imply that $H^2_{\mathrm{et}}(X, \Q_\ell(i))=0$
hence the claim in this case.
\end{proof}

\begin{lem}\label{lem:q=1}
Let $X$ be a smooth curve over a finite field.
We have
\[
\CH^i(X,1)=\left\{
\begin{array}{ll}
0 ,&i=0,\\
\cO(X)^\times, &i=1, \\
0, &i\ge 3.
\end{array}
\right.
\]
\end{lem}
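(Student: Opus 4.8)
The plan is to use the identification $\CH^i(X,j)=H_\cM^{2i-j}(X,\Z(i))$ recalled in the proof of Lemma~\ref{lem:q ge 2} and to dispose of the three ranges separately; only the case $i=1$ carries real content. For $i\ge 3$ I would argue by dimension, exactly as at the start of the proof of Lemma~\ref{lem:q ge 2}: since $\dim X=1$ one has $\CH^i(X,1)=0$ for $i>2$ (already $z^i(X,1)=0$, codimension-$i$ cycles being impossible on the surface $X\times\Delta^1$ when $i>2$), hence $\CH^i(X,1)=0$ for all $i\ge 3$.

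For $i=0$ I would use that the weight-zero motivic complex $\Z(0)$ is quasi-isomorphic to the constant sheaf $\Z$ placed in cohomological degree $0$, whence $\CH^0(X,1)=H_\cM^{-1}(X,\Z(0))=H^{-1}_{\Zar}(X,\Z)=0$. Equivalently, one checks directly from Bloch's complex that for connected $X$ the group $z^0(X,n)$ is free of rank one on $[X\times\Delta^n]$ and that the face differentials alternate between $0$ and the identity, so the only nonvanishing homology sits in degree $0$ and equals $\Z$; in particular $\CH^0(X,1)=0$, consistently with $\CH^0(X,0)=\Z$ in Lemma~\ref{lem:q=0}.

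The substantial case is $i=1$, where the key input is the computation of the weight-one motivic complex: on smooth schemes there is a quasi-isomorphism $\Z(1)\simeq\Gm[-1]$ of complexes of Nisnevich (equivalently Zariski) sheaves. Granting this, $\CH^1(X,1)=H_\cM^1(X,\Z(1))=\mathbb H^1(X_{\Zar},\Gm[-1])=H^0(X_{\Zar},\Gm)=\cO(X)^\times$, as claimed. The hard part is precisely this identification $\Z(1)\simeq\Gm[-1]$: it is a foundational theorem rather than something accessible by the elementary dimension-and-weight bookkeeping of Lemma~\ref{lem:q ge 2}, and since $K_1(X)\otimes\Q\ne 0$ the torsion-reduction strategy of that lemma is unavailable here. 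If one wished to avoid quoting the identification, the alternative is to run Bloch's localization sequence for $X$ with generic point $\eta$ and closed points $x$, reducing to $\CH^1(\Spec F,1)=K_1^M(F)=F^\times$ for a field $F$ together with the vanishing of $\CH^1(\Spec F,n)$ for $n\ne 1$ and of the relevant $\CH^0(x,n)$; reassembling these yields $\cO(X)^\times$ again. Note finally that the lemma asserts nothing for $i=2$, consistent with $\CH^2(X,1)=H_\cM^3(X,\Z(2))$ lying at the edge of the dimension range and being a genuinely delicate group.
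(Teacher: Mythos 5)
Your proposal is correct and follows essentially the same route as the paper: the cases $i=0$ and $i\ge 3$ are handled by direct inspection of the cycle complex and by dimension reasons, and the case $i=1$ rests on Bloch's identification of the weight-one motivic complex with $\Gm[-1]$, which is exactly the content of the theorem of Bloch (THEOREM (6.1) of \cite{Bloch}) that the paper cites. The extra detail you supply for $i=0$ and the alternative localization argument are fine but not needed beyond what the paper invokes.
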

\begin{proof}
The case $i=0$ is trivial. 
The case $i=1$ is found in Bloch's paper (\cite[THEOREM (6.1), p.287]{Bloch}).
For $i \ge 3$, the claim follows by dimension reason.
\end{proof}
\begin{lem}\label{lem:3,2}
Let $U$ be an affine smooth curve over a finite field. 
Then $\CH^2(U, 1)=0$.
\end{lem}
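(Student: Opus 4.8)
The plan is to compute $\CH^2(U,1)=H_\cM^3(U,\Z(2))$ by the same mechanism used in Lemma~\ref{lem:q ge 2}: compare integral, rational and $\Q/\Z$ coefficients and then pass to \'etale cohomology. The only genuinely new point is that here $j=1$, so the Harder vanishing invoked there is unavailable and the rational part has to be controlled directly. Write $F=\F_q(U)$ for the function field and $U_0$ for the set of closed points. The Gersten (coniveau) resolution of the weight-two motivic complex on the smooth curve $U$ identifies $H_\cM^3(U,\Z(2))$ with the cokernel of the tame symbol $K_2(F)\to\bigoplus_{x\in U_0}\kappa(x)^\times$ (the potential $E_2^{0,3}$-term vanishes since $H_\cM^3(F,\Z(2))=0$ for the field $F$). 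First I would record this identification, which exhibits $\CH^2(U,1)$ as a quotient of $\bigoplus_{x\in U_0}\kappa(x)^\times$.

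The key elementary observation is that every residue field $\kappa(x)$ at a closed point of a curve over $\F_q$ is finite, so $\bigoplus_{x\in U_0}\kappa(x)^\times$ is a torsion group. Hence $\CH^2(U,1)$ is torsion, and in particular $H_\cM^3(U,\Q(2))=\CH^2(U,1)\otimes_\Z\Q=0$. It then remains, by the exact sequence attached to $0\to\Z\to\Q\to\Q/\Z\to0$ exactly as in Lemma~\ref{lem:q ge 2}, to prove that $H_\cM^2(U,\Q/\Z(2))=0$: the segment $H_\cM^2(U,\Q/\Z(2))\to H_\cM^3(U,\Z(2))\to H_\cM^3(U,\Q(2))$ then sandwiches $\CH^2(U,1)$ between two vanishing groups and forces it to be zero.

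To kill $H_\cM^2(U,\Q/\Z(2))$ I would invoke the Geisser--Levine computation used in Lemma~\ref{lem:q ge 2}, now with $(i,j)=(2,1)$, which lies on the boundary $i=j+1$ of the admissible range. The $p$-part is $\varinjlim_r H^0(U_{\mathrm{Zar}},\nu_r^2)$, and this vanishes for the clean reason that $\nu_r^2=0$ on a one-dimensional scheme (the logarithmic de Rham--Witt sheaf $\nu_r^i$ is zero for $i>\dim U$); note this is a different justification from the ``$j\ge2$'' reasoning in Lemma~\ref{lem:q ge 2}, which is not available here. For the prime-to-$p$ part I would rerun the ``$a=2$, $X$ affine'' argument of Lemma~\ref{lem:q ge 2} to get $H_{\mathrm{et}}^2(U,\Q_\ell/\Z_\ell(2))=0$: the group $H_{\mathrm{et}}^3(U,\Z_\ell(2))$ vanishes because an affine curve over a finite field has \'etale cohomological dimension $2$, while the Hochschild--Serre spectral sequence for $\overline{U}=U\times_{\Spec\F_q}\Spec\Fbar_q$ gives $H_{\mathrm{et}}^2(U,\Q_\ell(2))=0$. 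Indeed $E_2^{p,q}=0$ for $p\ge2$ (Galois cohomological dimension one), $E_2^{0,2}=0$ (an affine curve over $\Fbar_q$ has cohomological dimension $1$), and $E_2^{1,1}=0$ because $H_{\mathrm{et}}^1(\overline{U},\Q_\ell)$ has weights $1,2$, so after the Tate twist by $(2)$ the weights become $-3,-2$, both nonzero, whence Frobenius has no eigenvalue $1$.

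The one place that genuinely needs care---and the only real obstacle---is this weight bookkeeping: the vanishing $H_{\mathrm{et}}^2(U,\Q_\ell(2))=0$ rests on $2-2i\neq0$, i.e.\ $i\neq1$, which holds precisely because $i=2$. This is exactly where the constraint shows its teeth and why the affine statement survives at $j=1$. I would also verify carefully that the Geisser--Levine / Beilinson--Lichtenbaum comparison is legitimate at the boundary value $i=j+1$, and track where the affineness of $U$ is actually used (both cohomological-dimension inputs, over $\F_q$ and over $\Fbar_q$), confirming that the affine hypothesis of the lemma is indispensable.
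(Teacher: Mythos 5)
Your proof is correct, and the second half (the passage from ``torsion'' to ``zero'' via the coefficient sequence, the Geisser--Levine comparison at the boundary value $i=j+1$, the vanishing of the $p$-part, and the affine cohomological-dimension and weight arguments) is exactly what the paper means by ``one proceeds as in Lemma~\ref{lem:q ge 2}''; you have simply written out the details, including the correct observation that the $p$-part dies because $\nu_r^2=0$ on a curve rather than for the ``$j\ge 2$'' reason used there. Where you genuinely diverge is in proving that $\CH^2(U,1)$ is torsion. The paper gets this from the exact sequence $0\to SK_1(U)\to K_1(U)\to \cO(U)^\times\to 0$, Grayson's theorem (after Quillen/Harder) that $SK_1(U)\otimes_\Z\Q=0$ for an affine smooth curve, Lemma~\ref{lem:q=1}, and a dimension count in Bloch's formula \eqref{eqn:Bloch}; you instead use the localization/Gersten sequence to exhibit $H_\cM^3(U,\Z(2))$ as a quotient of $\bigoplus_{x\in U_0}\kappa(x)^\times$ (using $H^3_\cM(F,\Z(2))=\CH^2(F,1)=0$ by dimension reasons for the function field $F$), which is torsion because the residue fields are finite. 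Your route is more elementary at this step---it needs no finiteness theorem from $K$-theory, only the localization sequence---and it makes visible that torsionness holds for \emph{every} smooth curve over a finite field, with affineness entering only through the \'etale vanishing; the paper's route, by contrast, reuses the same Harder/Grayson input that already drives Lemma~\ref{lem:q ge 2} and so keeps the two lemmas structurally parallel. (A minor remark: for the torsion claim you do not even need to identify the differential with the tame symbol on $K_2(F)$; being a quotient of $\bigoplus_x\kappa(x)^\times$ suffices.)
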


\begin{proof}
The group $SK_1(U)$ sits in the following exact sequence:
\[
0 \to SK_1(U) \to K_1(U) \to \cO(U)^\times \to 0.
\]
We use the result 
\cite[THEOREM~0.5, p.70]{Grayson},
which says that $SK_1(U)\otimes_\Z \Q=0$
for an affine smooth curve $U$.
Using Lemma~\ref{lem:q=1}, it follows from counting the dimension of 
both sides of Bloch's formula \eqref{eqn:Bloch}
that $\mathrm{dim}_\Q\, H_\cM^3(U, \Q(2))=0$.
For the rest of the proof, one proceeds as in Lemma~\ref{lem:q ge 2}.
\end{proof}

\begin{lem}\label{rmk:cycle_class}
Let $Z$ be a scheme which is finite over $\Spec\F_q$.
Then we have isomorphisms
$$
\begin{array}{l}
H^\BM_{-1}(Z,\Z(j)) \stackrel{(1)}{\cong}
H^\BM_{-1}(Z_\red,\Z(j)) \\
\stackrel{(2)}{\cong}
H^\BM_0(Z_\red,\Q/\Z(j)) 
\stackrel{(3)}{\cong}
\bigoplus_{\ell \neq p} 
H^0_\etale(Z_\red, \Q_\ell/\Z_\ell(-j))
\end{array}
$$
for $j\le -1$, which are functorial with respect to pushforwards.
Here $Z_{\mathrm{red}}$
denotes the reduced scheme associated to $Z$.
\end{lem}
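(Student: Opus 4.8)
The plan is to prove the three isomorphisms $(1)$, $(2)$, $(3)$ in turn. Write $\iota\colon Z_\red \hookrightarrow Z$ for the reduction, and observe that $Z_\red$ is a finite disjoint union of spectra of finite extensions of $\F_q$, hence smooth of pure dimension $0$ over $\F_q$. Consequently its Borel--Moore motivic homology agrees with its motivic cohomology through the index convention $H^\BM_i(Z_\red,A(j)) = H_\cM^{-i}(Z_\red, A(-j))$ for any coefficient group $A$, and I will pass freely between the two pictures. Throughout I set $m = -j$, so that the hypothesis $j \le -1$ reads $m \ge 1$.

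For $(1)$ I would use that Bloch's cycle groups are free on integral closed subschemes meeting the faces properly. Since $\iota \times \id\colon Z_\red \times \Delta^\bullet \to Z \times \Delta^\bullet$ is a nilpotent thickening, it is a homeomorphism preserving dimensions and induces a bijection on such subschemes. Hence the proper pushforward $\iota_*\colon z_j(Z_\red,\bullet) \to z_j(Z,\bullet)$ is an isomorphism of complexes, and $(1)$ is the inverse of the induced map on homology; this is visibly compatible with pushforward along finite morphisms.

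For $(2)$ I would tensor the short exact sequence $0 \to \Z \to \Q \to \Q/\Z \to 0$ with the complex $z_j(Z_\red,\bullet)$ of free abelian groups and take the associated long exact homology sequence. At the relevant spot it reads
$$H^\BM_{0}(Z_\red,\Q(j)) \to H^\BM_{0}(Z_\red,\Q/\Z(j)) \to H^\BM_{-1}(Z_\red,\Z(j)) \to H^\BM_{-1}(Z_\red,\Q(j)).$$
The two outer terms are $\CH_j(Z_\red,-2j)\otimes_\Z\Q$ and $\CH_j(Z_\red,-2j-1)\otimes_\Z\Q$, using $H^\BM_i(Z_\red,\Q(j)) = \CH_j(Z_\red,i-2j)\otimes_\Z\Q$. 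Since $-2j \ge 2$ and $-2j-1 \ge 1$, each is, under the identification of homological and cohomological higher Chow groups for the zero-dimensional scheme $Z_\red$, a direct summand of $K_n(Z_\red)\otimes_\Z\Q$ with $n \ge 1$ by Bloch's formula \eqref{eqn:Bloch}; these vanish because $K_n(\F_q)$ is finite for $n\ge 1$ (Quillen). Hence the middle map is the isomorphism $(2)$, and it is functorial for pushforwards since the long exact sequence is.

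The isomorphism $(3)$ is the substantive step, and the only one resting on deep input. I would apply the Geisser--Levine comparison exactly as in the proof of Lemma~\ref{lem:q ge 2}: for the smooth scheme $Z_\red$ in cohomological degree $0$ and weight $m$ one has
$$H_\cM^{0}(Z_\red,\Q/\Z(m)) \cong \bigoplus_{\ell \neq p} H_\etale^{0}(Z_\red,\Q_\ell/\Z_\ell(m)) \oplus \varinjlim_r H^{-m}(Z_{\red,\Zar},\nu_r^{m}),$$
invoking \cite[Corollary 1.2, p.56]{Ge-Le2} for the prime-to-$p$ part (valid because $0 \le m$, the Beilinson--Lichtenbaum range) and \cite[Theorem 8.4, p.491]{Ge-Le1} for the $p$-part. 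The $p$-part equals $\varinjlim_r H^{j}(Z_{\red,\Zar},\nu_r^{m})$ since $-m = j$, and it vanishes as $j \le -1 < 0$ and Zariski cohomology in negative degrees is zero. Rewriting $H_\cM^{0}(Z_\red,\Q/\Z(m)) = H^\BM_{0}(Z_\red,\Q/\Z(j))$ yields $(3)$, with functoriality for finite pushforwards inherited from the Geisser--Levine cycle maps. The hard part is precisely this last step: through the Beilinson--Lichtenbaum isomorphism it relies on the Bloch--Kato--Milnor conjecture (Rost--Voevodsky), whereas $(1)$ is formal and $(2)$ reduces to Quillen's computation of $K_*(\F_q)$.
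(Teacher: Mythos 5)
Your proofs of the three isomorphisms coincide with the paper's: (1) is the identification of the cycle complexes of $Z$ and $Z_\red$, (2) is the universal-coefficient sequence whose outer terms die rationally by Quillen's finiteness of $K_n(\F_q)$ for $n\ge 1$ via Bloch's formula, and (3) is the Geisser--Levine comparison, with the $p$-part $\varinjlim_r H^{-m}(Z_{\red,\Zar},\nu_r^m)$ vanishing because $-m=j\le -1$. The gap is in the final clause. The lemma asserts that the isomorphisms are \emph{functorial with respect to pushforwards}, and for (3) you dispose of this with ``inherited from the Geisser--Levine cycle maps.'' That is not automatic: the cycle class map of \cite{Ge-Le2} is assembled from pullback-compatible comparison morphisms of complexes of sheaves, and its commutation with proper pushforward (corestriction on the \'etale side, the transfer on the motivic side) is a separate assertion that needs either a proof or a citation. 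The paper spends roughly half of its proof of this lemma on exactly this point: it reduces to the case where $Z$ and $Z'$ are spectra of finite extensions of $\F_q$, forms $Z''=Z'\times_Z Z'$, observes that $Z''$ is a disjoint union of copies of $Z'$ (where compatibility is immediate), and then descends using the commutative square relating $f_*$, $f^*$, $(\pr_1)_*$ and $\pr_2^*$ together with the injectivity of $f^*$ on $H^0_\etale(-,\Q_\ell/\Z_\ell(-j))$.

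To close this you could either reproduce that descent argument or cite the compatibility directly --- the paper itself invokes \cite[Lemma~3.5(2), p.69]{Ge-Le2} for ``the cycle class map is compatible with the pushforward by a finite morphism'' in the proof of Lemma~\ref{lem:norm_surj}. Either way some justification is required: the one statement the authors found it necessary to verify at length is the one you assert without argument. Everything else in your proposal is correct and follows the paper's route, including the index bookkeeping $H^\BM_i(Z_\red,A(j))=H^{-i}_\cM(Z_\red,A(-j))$ for the zero-dimensional smooth scheme $Z_\red$ and the range check $0\le m$ needed to apply \cite[Corollary 1.2]{Ge-Le2}.
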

\begin{proof}
For any scheme $W$ of finite type over $\F_q$
and an abelian group $A$,
we have $H_i^\BM(W, A(j)) \cong
H_i^\BM(W_{\mathrm{red}}, A(j))$ for any $i,j$,
since the cycle complexes are canonically
isomorphic by definition. This gives the isomorphism (1).

For (2),  
we use the long exact sequence
of the universal coefficient theorem for higher Chow groups:
\[
\begin{array}{l}
H^\BM_0(Z_\red,\Q(j)) \to H^\BM_0(Z_\red,\Q/\Z(j)) \\
\to
H^\BM_{-1}(Z_\red,\Z (j)) \to H^\BM_{-1}(Z_\red, \Q(j)).
\end{array}
\]
We know that the higher K-groups of a finite field are torsion from
\cite[THEOREM~8, p.583]{QuFinite}.  
Then using a formula of Bloch \eqref{eqn:Bloch}, 
we see that the groups in the sequence above with $\Q$-coefficient
are zero.

Since $\F_q$ is perfect, $\Spec Z_\red$
is smooth over $\F_q$. The map (3) is the cycle map in \cite{Ge-Le2}
(which is defined for smooth schemes over a field).
The fact that the cycle map is an isomorphism follows from
\cite[Corollary 1.2, p.~56]{Ge-Le2} and \cite[(11.5), THEOREM, p.~328]{Me-Su1},
and \cite[Theorem~1.1, p406]{Ge-Le1}.

It is clear that the isomorphisms (1) and (2) are functorial 
with respect to pushforwards.
Let $Z'$ be another scheme which is finite over $\Spec\F_q$
and let $f:Z' \to Z$ be a morphism over $\Spec\F_q$.
We prove that the isomorphisms (3) for $Z$ and $Z'$ are 
compatible with the pushforward maps with respect to $f$.
We are easily reduced to the case when both $Z$ and
$Z'$ are spectra of finite extensions of $\F_q$.
Let $Z'' = Z' \times_{Z} Z'$ and let
$\pr_1,\pr_2 : Z'' \to Z'$ denote the projections to the
first and the second factor, respectively.
Then the diagram
$$
\begin{CD}
H_0^\BM(Z',\Q/\Z(j)) @>{f_*}>> H_0^\BM(Z,\Q/\Z(j))\\
@V{\pr_2^*}VV @VV{f^*}V \\
H_0^\BM(Z'',\Q/\Z(j)) @>{(\pr_1)_*}>> H_0^\BM(Z',\Q/\Z(j))
\end{CD}
$$
is commutative, and a similar commutativity holds for
the corresponding \'etale cohomology groups. Since the pullback map
$f^* :H^0_\etale(Z,\Q_\ell/\Z_\ell(-j))
\to H^0_\etale(Z',\Q_\ell/\Z_\ell(-j))$ is injective, it suffices to prove that
the isomorphisms (3) for $Z''$ and $Z'$ are 
compatible with the pushforward maps with respect to $\pr_1$.
Since $Z''$ is isomorphic to the disjoin union of a finite number
of copies of $Z'$, the last claim can be checked easily.
The lemma is proved.
\end{proof}

The statement is better understood using \'etale Borel-Moore homology groups. 
See Remark~\ref{rmk:BMetale}.

\begin{rmk}\label{rmk:cyclic}
Suppose $X$ is a connected scheme 
which is proper over $\Spec \F_q$.
Then Theorem~\ref{prop:AppB_main}
says that the group $H^\BM_{-1}(X,\Z(j))$ 
is isomorphic to $H^\BM_{-1}(\Spec \cO(X), \Z(j)).$
We can then use Lemma~\ref{rmk:cycle_class} and 
compute this group explicitly.  The computation 
of \'etale cohomology group  shows that this group 
is   
a cyclic group whose order is
$$|\cO(X_\red)|^{-j} -1$$ for each $j=-1, -2$.
\end{rmk}

\begin{lem}\label{lem:norm_surj}
Let $\F'_{1}$, $\F'_{2}$ be two finite extensions of $\F_q$
with $\F'_{1} \subset \F'_{2}$.
Then for $j \le -1$, 
the pushforward map $H^\BM_{-1}(\Spec \F'_{2}, \Z(j)) \to
H^\BM_{-1}(\Spec \F'_{1}, \Z(j))$ is surjective.
\end{lem}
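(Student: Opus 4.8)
The plan is to reduce the statement, via Lemma~\ref{rmk:cycle_class}, to an explicit computation of a trace map in \'etale cohomology, and then to carry out that computation at the level of Galois modules. Since $\Spec\F'_1$ and $\Spec\F'_2$ are already reduced, applying Lemma~\ref{rmk:cycle_class} to both, and using that the isomorphism (3) there is functorial with respect to pushforwards, identifies the map in question with the direct sum over $\ell\neq p$ of the pushforward (trace) maps
$$
H^0_\etale(\Spec\F'_2,\Q_\ell/\Z_\ell(-j)) \to H^0_\etale(\Spec\F'_1,\Q_\ell/\Z_\ell(-j)).
$$
Because each $\Q_\ell/\Z_\ell(-j)$ is $\ell$-primary, this decomposition is preserved, so it suffices to prove surjectivity one prime $\ell\neq p$ at a time.

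Next I would set $n=-j\ge 1$, $q_i=|\F'_i|$, $d=[\F'_2:\F'_1]$, and identify $H^0_\etale(\Spec\F'_i,\Q_\ell/\Z_\ell(n))$ with the Galois invariants $(\Q_\ell/\Z_\ell(n))^{G_i}$, where $G_i=\Gal(\overline{\F}_q/\F'_i)$ and the arithmetic Frobenius acts by multiplication by $q_i^n$. This invariant subgroup is the kernel of multiplication by $q_i^n-1$ on $\Q_\ell/\Z_\ell$, namely $\tfrac{1}{\ell^{a_i}}\Z/\Z$ with $a_i=v_\ell(q_i^n-1)$, cyclic of order $\ell^{a_i}$. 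Under this identification the pushforward is the corestriction map, which on $H^0$ (invariants) is given by summing over the coset representatives of $G_2$ in $G_1$; since Frobenius acts by $q_1^n$ and $q_2^n=q_1^{nd}$, this is multiplication by
$$
1+q_1^n+\cdots+q_1^{n(d-1)}=\frac{q_2^n-1}{q_1^n-1}=:N.
$$

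Finally I would verify surjectivity of multiplication by $N$. The key point is the valuation identity $v_\ell(N)=a_2-a_1$, which holds because $q_1^n-1 \mid q_2^n-1$ and $N$ is their (integer) quotient. Writing $N=\ell^{a_2-a_1}u$ with $u$ an $\ell$-adic unit, multiplication by $N$ sends the generator $\tfrac{1}{\ell^{a_2}}+\Z$ of the source to $\tfrac{u}{\ell^{a_1}}+\Z$, which generates the target $\tfrac{1}{\ell^{a_1}}\Z/\Z$; hence the map is surjective. Summing over $\ell\neq p$ gives the claim.

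The main obstacle is the middle step: one must correctly identify the geometric pushforward with corestriction on $H^0$ and pin down its description as the explicit norm factor $N$, being careful with the Tate-twist and Frobenius (arithmetic versus geometric) conventions, which change intermediate formulas by a unit but not the final surjectivity. It is worth emphasizing that the naive approach through the projection formula $f_*f^*=\times d$ does \emph{not} suffice, since multiplication by $d$ need not be surjective on the target (already for $q_1=3$, $n=1$, $d=2$ it is zero on $\Z/2$); it is precisely the factor $N$, with its exact $\ell$-adic valuation $a_2-a_1$, that makes the argument go through.
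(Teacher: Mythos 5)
Your proof is correct and follows essentially the same route as the paper: both reduce the statement via Lemma~\ref{rmk:cycle_class} and the compatibility of the cycle class map with pushforward to the surjectivity of the trace map on $\bigoplus_{\ell\neq p}H^0_\etale(-,\Q_\ell/\Z_\ell(-j))$ for finite fields. The only difference is that where the paper cites Soul\'e for that last step, you verify it directly by identifying the pushforward with corestriction, computing it as multiplication by $N=(q_2^{n}-1)/(q_1^{n}-1)$ with $n=-j$, and checking that $v_\ell(N)=a_2-a_1$ forces surjectivity; that computation is correct.
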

\begin{proof}
By Lemma~\ref{rmk:cycle_class}, the cycle class map gives 
an isomorphism $\alpha : H^\BM_{-1}(\Spec \F'_k,\Z(j)) 
\cong \bigoplus_{\ell \neq p} 
H_\etale^0(\Spec \, \F'_k, \Q_\ell/\Z_\ell(-j))$
for $k=1,2$. The cycle class map is compatible with
the pushforward by a finite morphism (\cite[Lemma~3.5(2), p.69]{Ge-Le2}).
Thus the claim follows from the corresponding statement
for the \'etale cohomology groups.
(See \cite[Lemme~6 iii), p.269]{Soule} and 
\cite[IV.1.7, p.283]{Soule}.)
\end{proof}

\section{Proof of Theorem 1.1}
\label{sec:proof main}

\begin{lem}\label{lem:normal_gcd}
Let $X$ be an integral scheme which is of
finite type over $\Spec \F_q$. Let $\F$ be the
algebraic closure of $\F_q$ in $\cO(X)$.
Then the degree $[\F:\F_q]$ divides the degree
$[\kappa(x): \F_q]$ for
all closed points $x\in X_0$. If moreover $X$ is normal,
we have the equality 
$[\F:\F_q] = \gcd_{x\in X_0} [\kappa(x): \F_q]$.
\end{lem}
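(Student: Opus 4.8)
The plan is to separate the divisibility assertion, which holds with no normality hypothesis, from the equality, which needs a point-counting input. First I would prove that $[\F:\F_q]$ divides $[\kappa(x):\F_q]$ for every closed point $x\in X_0$. For such an $x$ the stalk map followed by reduction modulo the maximal ideal is a ring homomorphism $\cO(X)=\Gamma(X,\cO_X)\to\kappa(x)$, and its restriction to the subfield $\F\subseteq\cO(X)$ is a homomorphism of fields carrying $1$ to $1$, hence injective. Thus $\F$ embeds into $\kappa(x)$ over $\F_q$, giving the divisibility and, consequently, that $[\F:\F_q]$ divides $\gcd_{x\in X_0}[\kappa(x):\F_q]$. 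Writing $d=[\F:\F_q]$ and using the embedding $\F\hookrightarrow\kappa(x)$, we have $[\kappa(x):\F_q]=d\cdot[\kappa(x):\F]$, so $\gcd_{x\in X_0}[\kappa(x):\F_q]=d\cdot\gcd_{x\in X_0}[\kappa(x):\F]$; it therefore remains only to show, under normality, that $\gcd_{x\in X_0}[\kappa(x):\F]=1$.

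Next, assuming $X$ normal, I would identify $\F$ with the exact constant field of the function field $K(X)$. Since $X$ is integral, $\cO(X)=\bigcap_{x\in X}\cO_{X,x}$ inside $K(X)$, and normality makes each $\cO_{X,x}$, hence their intersection, integrally closed in $K(X)$. Any element of $K(X)$ algebraic over $\F_q$ is integral over $\F_q\subseteq\cO(X)$, so it lies in $\cO(X)$ and thus in $\F$; hence $\F$ is exactly the algebraic closure of $\F_q$ in $K(X)$. As $\F$ is finite, hence perfect, and algebraically closed in $K(X)$, the extension $K(X)/\F$ is regular, which is precisely the statement that $X$, via the morphism $X\to\Spec\F$ attached to $\F\hookrightarrow\cO(X)$, is geometrically integral over $\Spec\F$.

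Finally I would extract the coprimality of the residue degrees from a rational-point count over $\F$. Regarding $X$ as a geometrically integral $\F$-scheme, the Lang--Weil estimate gives $\#X(\F_{|\F|^m})=|\F|^{m\dim X}+O(|\F|^{m(\dim X-1/2)})$, so $X(\F_{|\F|^m})\neq\emptyset$ for all sufficiently large $m$. Any such point has image a closed point $x$ with $[\kappa(x):\F]$ dividing $m$. Hence if $g:=\gcd_{x\in X_0}[\kappa(x):\F]$ were $>1$, choosing a large $m$ with $g\nmid m$ would produce a closed point $x$ with $[\kappa(x):\F]\mid m$ and $g\mid[\kappa(x):\F]$, forcing $g\mid m$, a contradiction. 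Therefore $g=1$ and $\gcd_{x\in X_0}[\kappa(x):\F_q]=d=[\F:\F_q]$.

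I expect the only genuine obstacle to be the two structural inputs in the normal case: the passage from normality to geometric integrality over $\F$, and the existence of points over all large finite extensions; the divisibility and the arithmetic bookkeeping with $d$ are routine. As an alternative to Lang--Weil one could argue directly from the rationality of the zeta function $\prod_{x\in X_0}(1-t^{[\kappa(x):\F]})^{-1}$, since a nontrivial $g$ would force this series to be a power series in $t^g$, contradicting the location of the pole coming from the leading Frobenius eigenvalue on top-degree compactly supported cohomology.
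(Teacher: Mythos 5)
Your proof is correct, and the divisibility half is identical to the paper's (the composite $\F \inj \cO(X) \to \kappa(x)$ is injective because $\F$ is a field). For the equality under normality, however, you take a genuinely different route. Both arguments share the preliminary reduction: normality makes $\cO(X)=\bigcap_x \cO_{X,x}$ integrally closed in the function field, so $\F$ is the full constant field and $X$ is geometrically irreducible (indeed geometrically integral, since $\F$ is perfect) over $\Spec\F$ --- the paper asserts this in one line where you spell it out. After that, the paper sets $d_0=(\gcd_{x}[\kappa(x):\F_q])/[\F:\F_q]$, forms the degree-$d_0$ constant-field extension $X\times_{\Spec\F}\Spec\F_0\to X$, observes that every closed point splits completely in this finite \'etale cover, and invokes a Chebotarev-density-type theorem of Lang (via Raskind's Lemma 1.7) to conclude the cover is trivial, i.e.\ $d_0=1$. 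You instead get $\gcd_x[\kappa(x):\F]=1$ from the Lang--Weil estimate (or, as you note, from the zeta function $\prod_x(1-t^{[\kappa(x):\F]})^{-1}$), producing $\F_{|\F|^m}$-points for all large $m$ and hence closed points of coprime residue degrees. The two are close cousins --- Chebotarev over function fields is itself proved by such point counts --- but yours is arguably more self-contained and quantitative (existence of points over all sufficiently large extensions), while the paper's is a cleaner citation that avoids any asymptotic estimate and any appeal to geometric integrality beyond irreducibility. One small caveat for your version: Lang--Weil is usually stated for quasi-projective geometrically irreducible varieties, so for a general finite-type integral scheme you should first pass to a dense open affine subscheme; this loses nothing since its closed points are among those of $X$.
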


\begin{proof}
For each $x \in X_0$, the composite
$\F \inj \cO(X) \to \kappa(x)$,
where the second map is induced from the pullback map
by the closed immersion,
is injective since $\F$ is a field. Hence
$[\F:\F_q]$ divides $[\kappa(x): \F_q]$.

Suppose that $X$ is normal of dimension $d$. 
Then $X$ is geometrically irreducible
as a scheme over $\Spec \F$.
Let $d_0=(\gcd_{x \in X_0} [\kappa(x):\F_q])/[\F:\F_q]$.
Let $\F \subset \F_0$ denote an extension of degree $d_0$.
The canonical morphism
$f:X \times_{\Spec \F} \Spec \F_0 \to X$
is a finite \'etale cover in which every closed point of $X$
splits completely.
It follows from a Chebotarev density type theorem 
(\cite{La}; we refer to \cite[Lemma 1.7, p.98]{Raskind}
for the statement which is ready for our use)
that $f$ is an isomorphism.  Hence $d_0=1$.
This completes the proof.
\end{proof}

\begin{lem}\label{lem:red_to_non_proper}
Let $d \ge 0$ be an integer.
Suppose that Theorem~\ref{prop:AppB_main} holds
for all connected normal 
schemes over $\Spec \F_q$ of dimension $d$
which are not proper over $\Spec \F_q$.
Then Theorem~\ref{prop:AppB_main} holds
for all connected normal schemes 
of dimension $d$ which are proper over $\Spec \F_q$.
\end{lem}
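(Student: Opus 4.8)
The plan is to cut $X$ down to a single closed point via the localization sequence and then pin down the answer by a divisibility argument, thereby bypassing any analysis of boundary maps. If $d=0$ then $X=\Spec\cO(X)$ and $\alpha_X$ is the identity, so assume $d\ge 1$. Since $X$ is proper and connected, $\F:=\cO(X)$ is a finite extension of $\F_q$ --- the field appearing in Lemma~\ref{lem:normal_gcd} --- the scheme $X$ is integral, $\Spec\cO(X)=\Spec\F$, and $\alpha_X$ is the pushforward along the structure map $p:X\to\Spec\F$.

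For a closed point $x\in X$ put $U_x=X\setminus\{x\}$. Because $X$ is integral of dimension $d\ge 1$, the open subscheme $U_x$ is connected, normal, of dimension $d$ and not proper, so the hypothesis gives $H^\BM_{-1}(U_x,\Z(j))=0$. The localization sequence for $\{x\}\hookrightarrow X\hookleftarrow U_x$ then makes
$$H^\BM_{-1}(\Spec\kappa(x),\Z(j)) \xrightarrow{\,i_{x,*}\,} H^\BM_{-1}(X,\Z(j)) \to H^\BM_{-1}(U_x,\Z(j))=0$$
exact, so $i_{x,*}$ is surjective. By Lemma~\ref{rmk:cycle_class} and the Galois cohomology of the finite residue field, the source is cyclic of order $N_x:=|\kappa(x)|^{-j}-1$; hence $H^\BM_{-1}(X,\Z(j))$ is finite cyclic, and its order $M$ divides $N_x$.

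Functoriality of pushforward gives $\alpha_X\circ i_{x,*}=(\pi_x)_*$ for the structure map $\pi_x:\Spec\kappa(x)\to\Spec\F$, and $(\pi_x)_*$ is surjective by Lemma~\ref{lem:norm_surj}; thus $\alpha_X$ is surjective, and writing $N:=|\F|^{-j}-1$ for the order of $H^\BM_{-1}(\Spec\F,\Z(j))$ (again via Lemma~\ref{rmk:cycle_class}) the surjection $\Z/M\to\Z/N$ forces $N\mid M$. Since $M=|H^\BM_{-1}(X,\Z(j))|$ does not depend on $x$, in fact $M\mid\gcd_{x\in X_0}N_x$. Writing $b=q^{-j}\ge 2$ and $d_x=[\kappa(x):\F_q]$, so that $N_x=b^{d_x}-1$, the classical identity $\gcd_i(b^{d_i}-1)=b^{\gcd_i d_i}-1$ combined with Lemma~\ref{lem:normal_gcd} (which gives $\gcd_x d_x=[\F:\F_q]$) yields $\gcd_x N_x=b^{[\F:\F_q]}-1=N$. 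Hence $M\mid N$, and together with $N\mid M$ this gives $M=N$, so $\alpha_X:\Z/N\to\Z/N$ is an isomorphism.

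The attractive feature of this route is that a single closed point already forces $H^\BM_{-1}(X,\Z(j))$ to be cyclic, so its order is determined by divisibility alone and neither the boundary map nor the group $H^\BM_0(U_x,\Z(j))$ ever enters. Accordingly the point that must be secured is formal rather than computational: the existence and functoriality of the localization sequence for Borel--Moore motivic homology along the closed point inside a merely normal $X$, together with the compatibility $\alpha_X\circ i_{x,*}=(\pi_x)_*$. I expect this to be the only delicate step; once it is in place, the numerical argument is immediate.
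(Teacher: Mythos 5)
Your proposal is correct and follows essentially the same route as the paper's proof: remove a closed point $x$, use the hypothesis on the non-proper complement $X\setminus\{x\}$ and the localization sequence to make $\iota_{x*}$ surjective, use Lemma~\ref{lem:norm_surj} to make $\alpha_X$ surjective, and then pin down the order of $H^\BM_{-1}(X,\Z(j))$ by letting $x$ vary and invoking Lemma~\ref{lem:normal_gcd} together with the identity $\gcd_x(b^{d_x}-1)=b^{\gcd_x d_x}-1$. The only cosmetic difference is that you spell out the cyclicity of the groups and split the order comparison into the two divisibilities $N\mid M$ and $M\mid N$, where the paper compares the order of the source with the order of the target directly.
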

\begin{proof}
Let $X$ be a connected normal scheme 
of dimension $d$ which is
proper over $\Spec \F_q$. 
Let $j \in \{-1, -2\}$. 
Let $x \in X_0$ be a closed point. 
The pushforward map $\alpha_X$ in the statement of
Theorem~\ref{prop:AppB_main}
is surjective since its composite with
the pushforward map 
$\iota_{x*}: H^\BM_{-1}(\Spec \kappa(x),\Z(j))
\to H^\BM_{-1}(X,\Z(j))$ is surjective
by Lemma~\ref{lem:norm_surj}.
By assumption, 
the group $H^\BM_{-1}(X \setminus \{x\},\Z(j))$
is zero. Hence the localization sequence shows that 
the pushforward map 
$\iota_{x*}$ is surjective. 
This implies that
$H^\BM_{-1}(X,\Z(j))$ is of order dividing
$\gcd_{x \in X_0}(|\kappa(x)|^{-j} -1) 
= q^{(-j)\cdot \gcd_{x\in X_0}[\kappa(x):\F_q]}- 1$.
This equals
$q^{[\F:\F_q]}$ by Lemma~\ref{lem:normal_gcd}
where $\F$ is the algebraic closure of $\F_q$ in $\cO(X)$.
We know the order of the target of $\alpha_X$ is also equal 
to this value from Lemma~\ref{rmk:cycle_class}.  
Hence the bijectivity of $\alpha_X$ follows.
\end{proof}

\begin{lem}\label{lem:sets}
Let
$S_1 \xleftarrow{\alpha_1} S_3 \xto{\alpha_2} S_2$ be a diagram of sets and
let $R$ be an integral domain.
For $i=1,2,3$, let $\Map(S_i,R)$ denote the $R$-module of $R$-valued functions on $S_i$.
Then the cokernel of the homomorphism
$$
\beta : \Map(S_1,R) \oplus \Map(S_2,R)\to \Map(S_3,R)
$$
which sends $(f_1,f_2)$ to 
$f_1 \circ \alpha_1 - f_2 \circ \alpha_2$ is $R$-torsion free.
\end{lem}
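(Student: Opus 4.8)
The plan is to reinterpret $\beta$ as a coboundary map on a bipartite graph and to prove that its cokernel is torsion free by a cycle-period argument, using only that $R$ is a domain. First I would encode the data as a graph $\Gamma$ whose vertex set is the disjoint union $S_1 \sqcup S_2$ and whose edge set is $S_3$, an element $e \in S_3$ being an edge joining $\alpha_1(e) \in S_1$ to $\alpha_2(e) \in S_2$ (parallel edges allowed). Under the identification $\Map(S_1,R) \oplus \Map(S_2,R) = \Map(S_1 \sqcup S_2, R)$, the map $\beta$ becomes, up to sign, the coboundary $d$ carrying a function on vertices to the edge function $e \mapsto f_1(\alpha_1(e)) - f_2(\alpha_2(e))$. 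I would then reduce to the case that $\Gamma$ is connected: decomposing $\Gamma$ into its connected components $\{\Gamma_c\}$ splits each of $\Map(S_1,R)$, $\Map(S_2,R)$, $\Map(S_3,R)$ into the corresponding products and is respected by $\beta$, so that $\Coker(\beta) \cong \prod_c \Coker(\beta_c)$; since a product of torsion free $R$-modules is torsion free, it suffices to treat one connected component.

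The heart of the argument is a characterization of $\Image(\beta)$. For a closed walk $w$ in $\Gamma$ traversing edges $e_1,\dots,e_n$, define its period $\mathrm{per}_w(\phi) = \sum_i \varepsilon_i\, \phi(e_i) \in R$, where $\varepsilon_i = +1$ if $e_i$ is traversed from $S_1$ to $S_2$ and $\varepsilon_i = -1$ otherwise. I claim that $\phi \in \Map(S_3,R)$ lies in $\Image(\beta)$ if and only if $\mathrm{per}_w(\phi) = 0$ for every closed walk $w$. The "only if" direction is immediate: for $\phi = \beta(f_1,f_2)$ the summands telescope, as each $\varepsilon_i \phi(e_i)$ is the difference of the values at the two ends of $e_i$.

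For the "if" direction, which I expect to be the main obstacle because $\Gamma$ may be infinite, I would fix a base vertex $v_0$ in the (connected) component. Connectedness furnishes, for each vertex $v$, a walk from $v_0$ to $v$, and the hypothesis that all periods vanish guarantees that the signed sum of $\phi$ along such a walk is independent of the chosen walk: the difference of two such sums is the period of a closed walk, hence $0$. This defines a function $h$ on the vertex set, that is, a pair $(h_1,h_2) \in \Map(S_1,R) \oplus \Map(S_2,R)$, and a direct check on each edge gives $\phi = \beta(h_1,h_2)$. I prefer this path-sum construction to a spanning-tree argument precisely because it avoids any appeal to a choice of spanning tree in the infinite case, needing only connectedness and the vanishing of all periods.

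Finally I would deduce torsion freeness. Suppose $r \in R$ is nonzero and $r\phi \in \Image(\beta)$ for some $\phi \in \Map(S_3,R)$. By the characterization, $\mathrm{per}_w(r\phi) = 0$ for every closed walk $w$; since $\mathrm{per}_w(r\phi) = r\cdot \mathrm{per}_w(\phi)$ and $R$ is an integral domain with $r \neq 0$, cancellation yields $\mathrm{per}_w(\phi) = 0$ for all $w$. Applying the characterization once more gives $\phi \in \Image(\beta)$. Thus no nonzero element of $\Coker(\beta)$ is killed by a nonzero element of $R$, so $\Coker(\beta)$ is $R$-torsion free, which is the assertion of the lemma.
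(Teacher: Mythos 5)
Your argument is correct, but it takes a genuinely different route from the paper's. You globalize the problem: you encode the diagram as a bipartite graph with vertex set $S_1 \sqcup S_2$ and edge set $S_3$, reduce to a connected component (harmless, since cokernels commute with arbitrary products of modules), and characterize $\Image(\beta)$ exactly as the set of edge-functions whose signed sums over all closed walks vanish. Torsion freeness then falls out because these period conditions are $R$-linear and $R$ is a domain; equivalently, your characterization embeds $\Coker(\beta)$ into the product $\prod_w R$ over all closed walks via $\phi \mapsto (\mathrm{per}_w(\phi))_w$, and such a product is visibly torsion free. Your choice of path-sums over a spanning-tree argument does correctly sidestep any awkwardness with infinite components, needing only connectedness and the vanishing of periods. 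The paper instead argues locally with a single relation $af = f_1\circ\alpha_1 - f_2\circ\alpha_2$: it chooses a complete set of representatives $T \subset R$ for $R/aR$, replaces each $f_i$ by its unique $T$-valued reduction $\overline{f}_i$, observes that $\overline{f}_1\circ\alpha_1$ and $\overline{f}_2\circ\alpha_2$ are congruent modulo $a$ and both $T$-valued, hence equal on the nose, and then cancels $a$ from the remaining relation. That argument is a few lines, uses no auxiliary combinatorial structure, and only invokes that $a$ is a non-zero-divisor; yours is longer but buys more, namely an explicit description of $\Image(\beta)$ and a concrete embedding of the cokernel into a torsion-free module.
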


\begin{proof}
Let $e:\Map(S_3,R) \to \Coker\, \beta$ denote the quotient map.
Let $f \in \Map(S_3,R)$ and suppose that $e(f)$ is
an $R$-torsion element in $\Coker\, \beta$. We prove that $e(f)=0$.
Since $e(f)$ is an $R$-torsion element, there exist a non-zero element $a \in R$
and an element $(f_1,f_2) \in \Map(S_1,\Z) \oplus \Map(S_2,\Z)$
satisfying $a f = f_1 \circ \alpha_1 - f_2 \circ \alpha_2$.
Let us take a complete set $T \subset R$ of representatives of $R/aR$.
For $i=1,2$, let $\overline{f}_i$ denote the unique $T$-valued function
on $S_i$ satisfying $\overline{f}_i(x)
\equiv f_i(x) \mod{aR}$ for every $x \in S_i$.
We then have
$$
\overline{f}_1 \circ \alpha_1 \equiv
f_1 \circ \alpha_1 \equiv f_2 \circ \alpha_2 \equiv
\overline{f}_2 \circ \alpha_2
$$
modulo $a \Map(S_3,R)$.
Since both $\overline{f}_1 \circ \alpha_1$
and $\overline{f}_2 \circ \alpha_2$ are $T$-valued functions,
we have $\overline{f}_1 \circ \alpha_1 = \overline{f}_2 \circ \alpha_2$.
For $i=1,2$, let $g_i$ denote the unique $R$-valued function
on $S_i$ satisfying $f_i = \overline{f}_i + a g_i$.
Then
$$
a f = (\overline{f}_1+ a g_1) \circ \alpha_1 - (\overline{f}_2+ a g_2) \circ \alpha_2
= a (g_1 \circ \alpha_1 - g_2 \circ \alpha_2).
$$
Since $\Map(S_3,R)$ is $R$-torsion free, we have
$f = g_1 \circ \alpha_1 - g_2 \circ \alpha_2$.
This shows that $e(f)=0$, which proves the claim.
\end{proof}

\begin{lem}\label{lem:red_to_normal}
Let $d \ge 0$ be an integer.
Suppose that Theorem~\ref{prop:AppB_main} holds
for all connected schemes of dimension smaller than $d$
which are proper over $\Spec\F_q$
and for all connected normal schemes 
over $\Spec \,\F_q$ of dimension $d$.
Then Theorem~\ref{prop:AppB_main} holds
for all connected schemes of dimension $d$
which are proper over $\Spec \F_q$.
\end{lem}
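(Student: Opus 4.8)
The plan is to reduce the general proper case to the normal proper case (hypothesis (B)) by passing to the normalization and comparing localization sequences. Since both $H^\BM_\bullet(-,\Z(j))$ and the map $\alpha_{(-)}$ depend only on the associated reduced scheme (isomorphism~(1) of Lemma~\ref{rmk:cycle_class}), I would first assume $X$ reduced; then $\cO(X)$ is a finite field $\F'$, because $X$ is connected and proper. Let $\pi\colon\tilde X\to X$ be the normalization, $Z\subset X$ the (closed, nowhere dense) non-normal locus, and $\tilde Z=\pi^{-1}(Z)$, so that $\pi$ is an isomorphism over $U=X\setminus Z$. Because $X$ is reduced it is normal at every generic point, so $\dim Z<d$ and $\dim\tilde Z<d$; moreover $Z$, $\tilde Z$ and $\tilde X$ are all proper over $\F_q$, their connected components being either normal of dimension $d$ (handled by the hypothesis for normal schemes of dimension $d$, call it (B)) or of dimension $<d$ (handled by the hypothesis for proper schemes of dimension $<d$, call it (A)). Thus (A) and (B) compute $H^\BM_{-1}$ of each of $Z$, $\tilde Z$, $\tilde X$ as the corresponding group $H^\BM_{-1}(\Spec\cO(-),\Z(j))$.

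Comparing the Bloch localization sequences of $(Z,X,U)$ and $(\tilde Z,\tilde X,U)$ along the proper pushforward $\pi_*$, and using that the two sequences have canonically identified terms over $U$, shows that the square of cycle complexes $z_j(\tilde Z,\bullet)\to z_j(\tilde X,\bullet)$, $z_j(Z,\bullet)\to z_j(X,\bullet)$ is homotopy cartesian; hence there is a Mayer--Vietoris long exact sequence
\[
\cdots\to H^\BM_i(\tilde Z)\xrightarrow{\gamma} H^\BM_i(Z)\oplus H^\BM_i(\tilde X)\xrightarrow{\delta} H^\BM_i(X)\xrightarrow{\partial} H^\BM_{i-1}(\tilde Z)\to\cdots
\]
in which all maps are pushforwards. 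Surjectivity of $\alpha_X$ is immediate: for any closed point $x$ the composite $\alpha_X\circ\iota_{x*}$ is the pushforward $H^\BM_{-1}(\Spec\kappa(x))\to H^\BM_{-1}(\Spec\F')$, which is onto by Lemma~\ref{lem:norm_surj}; so it suffices to bound the order of $H^\BM_{-1}(X)$. For $j=-1$ the connecting term $H^\BM_{-2}(\tilde Z,\Z(-1))=\CH_{-1}(\tilde Z,0)$ vanishes for dimension reasons, so $\partial=0$ and $H^\BM_{-1}(X)\cong\Coker\gamma$. Feeding in (A) and (B) and the fact that $\alpha_{(-)}$ commutes with pushforward, $\Coker\gamma$ is identified with the cokernel of the analogous pushforward map between the groups $H^\BM_{-1}(\Spec\cO(-),\Z(j))$ of the finite $\F_q$-schemes $\Spec\cO(\tilde Z)$, $\Spec\cO(Z)$, $\Spec\cO(\tilde X)$, and the claim becomes the purely finite-field statement that $\Coker\gamma\to H^\BM_{-1}(\Spec\F')$ is an isomorphism. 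Here I would use Lemma~\ref{lem:normal_gcd} (applied to the normal components of $\tilde X$) to pin down the degrees $[\cO(\tilde X_a):\F_q]$ and $[\F':\F_q]$, and Lemma~\ref{lem:sets}: realizing the groups $H^\BM_{-1}(\Spec\cO(-),\Z(j))$ as equivariant function groups on the finite Galois sets of geometric points, the kernel of the comparison map embeds into the cokernel of a pullback map $\Map(S_1,R)\oplus\Map(S_2,R)\to\Map(S_3,R)$, which is torsion free; being finite, it vanishes.

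The main obstacle is the case $j=-2$. There the connecting term $H^\BM_{-2}(\tilde Z,\Z(-2))=\CH_{-2}(\tilde Z,2)$ is not visibly zero---its vanishing would follow from Parshin's conjecture, which I want to avoid---so $\partial$ need not vanish, and $H^\BM_{-1}(X)$ is only an extension of $\Image\partial\subseteq H^\BM_{-2}(\tilde Z,\Z(-2))$ by $\Coker\gamma$. The finite-field computation of $\Coker\gamma$ goes through exactly as for $j=-1$ (now with the cyclic groups $H^\BM_{-1}(\Spec\F_{q^m},\Z(-2))$ of order $q^{2m}-1$ in place of the unit groups), so that $\alpha_X$ restricts to an isomorphism on $\Coker\gamma$; consequently $\ker\alpha_X\cong\Image\partial$ and the entire problem reduces to showing $\Image\partial=0$. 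I expect this to be the crux, and the route I would pursue is to exhibit $\ker\alpha_X$ simultaneously as a finite group and as a subobject of a torsion-free cokernel of the type produced by Lemma~\ref{lem:sets}, forcing it to be zero and thereby giving injectivity of $\alpha_X$ with no rational-vanishing input.
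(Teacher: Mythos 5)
Your reduction to the reduced case, the homotopy-cocartesian square of cycle complexes obtained by comparing the two localization sequences over $U=X\setminus Z$, and the surjectivity of $\alpha_X$ via Lemma~\ref{lem:norm_surj} are all sound, and your computation of $\Coker\,\gamma$ is in the same spirit as the paper's Lemma~\ref{lem:et cartesian} (note, though, that besides the torsion-freeness supplied by Lemma~\ref{lem:sets} one also needs that $\pi_0(\overline{X})$ is the set-theoretic pushout of $\pi_0(\overline{X}')\leftarrow\pi_0(\overline{Y}')\to\pi_0(\overline{Y})$, which requires a separate connectedness argument with irreducible components). The genuine gap is exactly where you place it: for $j=-2$ and $d\ge 2$ the group $H^\BM_{-2}(\tilde{Z},\Z(-2))=\CH_{-2}(\tilde{Z},2)$ is not known to vanish without Parshin's conjecture, and the escape route you sketch cannot work. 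Indeed $\ker\alpha_X\cong\Image\,\partial$ is a subquotient of $H^\BM_{-2}(\tilde{Z},\Z(-2))$, a group in homological degree $-2$ about which Lemma~\ref{lem:sets} --- which only describes $R$-valued functions on $\pi_0$ of the zero-dimensional schemes $\Spec\cO(-)$, i.e.\ the degree $-1$ invariants --- says nothing; there is no torsion-free cokernel in sight into which it embeds.

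The missing idea is that you never use the hypothesis for the \emph{non-proper} normal scheme $U=X\setminus Z$. Each connected component of $U$ is normal and, by Lemma~\ref{lem:not proper}, not proper (its closure in the connected scheme $X$ strictly contains it since $Z\neq\emptyset$), so the hypotheses give $H^\BM_{-1}(U,\Z(j))=0$ for both $j=-1,-2$. The localization sequence of the pair $(Z,X)$ then shows that the pushforward $H^\BM_{-1}(Z,\Z(j))\to H^\BM_{-1}(X,\Z(j))$ is surjective; since this map is (up to sign) the restriction of your $\delta$ to the summand $H^\BM_{-1}(Z,\Z(j))$, the map $\delta$ is surjective, hence $\partial=0$ and $H^\BM_{-1}(X,\Z(j))\cong\Coker\,\gamma$ for \emph{both} values of $j$, with no input about $H^\BM_{-2}(\tilde{Z},\Z(j))$ whatsoever. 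This is in substance what the paper does: it never writes the full Mayer--Vietoris sequence, but combines the surjectivity of $\beta:H^\BM_{-1}(Y,\Z(j))\to H^\BM_{-1}(X,\Z(j))$ with the cocartesian property of the square of groups $H^\BM_{-1}(\Spec\cO(-),\Z(j))$ to factor $\beta$ through $H^\BM_{-1}(\Spec\cO(X),\Z(j))$, bounding the order of $H^\BM_{-1}(X,\Z(j))$ from above. With that one addition your argument closes.
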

\begin{proof}
Let $X$ be a connected scheme of dimension $d$ 
which is proper over $\Spec \F_q$. 
Without loss of generality we may assume that
$X$ is reduced. Suppose that $X$ is not normal.
Let $\pi:X' \to X$ denote the normalization of $X$.
The scheme $X'$ is proper over $\Spec \F_q$
since $\pi$ is finite by \cite[Remarque 6.3.10, p.~120]{EGA2}. 
Take a reduced closed subscheme $Y \subset X$
of dimension less than that of $X$ such that $X \setminus Y$
is normal and set $Y' = (Y\times_X X')_\red$. 
By assumption, Theorem~\ref{prop:AppB_main} holds
for each connected component 
of $X \setminus Y$, $X'$, $Y$ and $Y'$.

Let $j \in \{-1, -2\}$. 
Let us consider the commutative diagram
$$
\begin{CD}
H^\BM_{-1}(Y,\Z(j)) @>{\beta}>>
H^\BM_{-1}(X,\Z(j)) \\
@V{\alpha_Y}V{\cong}V @V{\alpha_X}VV \\
H^\BM_{-1}(\Spec \cO(Y), \Z(j))
@>{\gamma}>>
H^\BM_{-1}(\Spec \cO(X),\Z(j))
\end{CD}
$$
where all the homomorphisms are pushforwards.
Since $\alpha_Y$ is an isomorphism and
we know that $\gamma$ is surjective using
Lemma~\ref{rmk:cycle_class} and Lemma~\ref{lem:norm_surj}, 
the homomorphism $\alpha_X$ is surjective.

Since $H^\BM_{-1}(X \setminus Y,\Z(j))$ is zero,
the localization sequence shows that
the map $\beta$ is surjective.

We use the following notation for short: for a scheme $Z$, we denote
$\Spec \cO(Z)_\red$ by $a(Z)$.
\begin{lem}
\label{lem:et cartesian}
The diagram
$$
\begin{CD}
H^0_\etale(a(Y'),\Q_\ell/\Z_\ell(-j)) 
@>>> H^0_\etale(a(X'),\Q_\ell/\Z_\ell(-j)) \\
@VVV @VVV \\
H^0_\etale(a(Y),\Q_\ell/\Z_\ell(-j)) 
@>>> H^0_\etale(a(X),\Q_\ell/\Z_\ell(-j)), \\
\end{CD}
$$
where the arrows are the pushforward homomorphisms,
is cocartesian for every prime number $\ell \neq p$.
\end{lem}
\begin{proof}
Let $\overline{X}=X \times_{\Spec \F_q}\Spec \overline{\F}_q$
and define $\overline{Y}$, $\overline{X}'$ and $\overline{Y}'$
in a similar manner.
By \cite[(1.4.16.1), p.94]{EGAIII}, 
we have $a(\overline{X}) = a(X) \times_{\Spec \F_q}\Spec \overline{\F}_q$
and similarly for $Y$, $X'$, and $Y'$.
Since $j \neq 0$, the weight argument shows that 
the $\mathrm{Gal}(\overline{\F}_q/\F_q)$-coinvariants
of any quotient $\mathrm{Gal}(\overline{\F}_q/\F_q)$-module 
and of any divisible $\mathrm{Gal}(\overline{\F}_q/\F_q)$-submodule of
$H^0_\etale(a(\overline{Y}'),\Q_\ell/\Z_\ell(-j))$ vanish.
Hence it suffices to show that the diagram
$$
\begin{CD}
H^0_\etale(a(\overline{Y}'),\Q_\ell/\Z_\ell(-j)) 
@>>> H^0_\etale(a(\overline{X}'),\Q_\ell/\Z_\ell(-j)) \\
@VVV @VVV \\
H^0_\etale(a(\overline{Y}),\Q_\ell/\Z_\ell(-j)) 
@>>> H^0_\etale(a(\overline{X}),\Q_\ell/\Z_\ell(-j)) 
\end{CD}
$$
is cocartesian in the category 
of $\mathrm{Gal}(\overline{\F}_q/\F_q)$-modules and that
the kernel of the homomorphism 
$H^0_\etale(a(\overline{Y}'),\Q_\ell/\Z_\ell(-j))
\to H^0_\etale(a(\overline{X}'),\Q_\ell/\Z_\ell(-j))
\oplus H^0_\etale(a(\overline{Y}),\Q_\ell/\Z_\ell(-j))$
is divisible.
By taking the Pontryagin dual, we prove that the diagram
$$
\begin{CD}
H^0_\etale(a(\overline{X}),\Z_\ell(j)) 
@>>> H^0_\etale(a(\overline{X}'),\Z_\ell(j)) \\
@VVV @VVV \\
H^0_\etale(a(\overline{Y}),\Z_\ell(j)) 
@>>> H^0_\etale(a(\overline{Y}'),\Z_\ell(j)),
\end{CD}
$$
where the arrows are the pullback homomorphisms,
is cartesian in the category 
of $\mathrm{Gal}(\overline{\F}_q/\F_q)$-modules
and that the cokernel of the homomorphism
\begin{equation}\label{eq:beta}
H^0_\etale(a(\overline{X}'),\Z_\ell(j)) 
\oplus H^0_\etale(a(\overline{Y}),\Z_\ell(j)) 
\to H^0_\etale(a(\overline{Y}'),\Z_\ell(j))
\end{equation}
is torsion free.

Let $Z$ be a scheme which is of finite type over $\F_q$.
Let us write $\overline{Z}= Z \times_{\Spec \F_q} \Spec \overline{\F}_q$.
Then we have an isomorphism
\begin{equation}
\label{eqn:map isom}
H^0_{\mathrm{et}}(a(\overline{Z}), \Z_\ell(j))
\cong \mathrm{Map}(\pi_0(\overline{Z}),
\Z_\ell) \otimes_{\Z_\ell} \Z_\ell(j)
\end{equation}
of $\mathrm{Gal}(\overline{\F}_q/\F_q)$-modules,
which is functorial with respect to pullbacks. 
It then follows from Lemma \ref{lem:sets} that
the homomorphism \eqref{eq:beta} has a torsion free cokernel.
Hence it suffices to show that 
the diagram
\begin{equation}\label{dia:3}
\begin{CD}
\pi_0(\overline{X}) @<<< \pi_0(\overline{X}') \\
@AAA @AA{\varphi}A \\
\pi_0(\overline{Y}) @<{\psi}<< \pi_0(\overline{Y}') \\
\end{CD}
\end{equation}
is cocartesian in the category of sets.

As $X' \to X$ is a normalization,
it is surjective.
As surjectivity is preserved under base change,
the map
$\overline{Y}' \to \overline{Y}$
is surjective, hence $\psi$ is surjective.
This implies that the pushout of 
the diagram
$$
\pi_0(\overline{X}')  \xleftarrow{\varphi} 
 \pi_0(\overline{Y}') \xto{\psi}
  \pi_0(\overline{Y}) 
$$
is isomorphic to the quotient of $\pi_0(\overline{X}')$
by the following equivalence relation.
We define a binary relation $\sim$
on $\pi_0(\overline{X}')$
as follows.  We say $x_1' \sim x_2'$
if there exist
$y_1', y_2' \in \pi_0(\overline{Y}')$
such that $x_1'=\varphi(y_1'),
x_2'=\varphi(y_2')$, and
$\psi(y_1')=\psi(y_2')$.
We also write $\sim$ for the 
equivalence relation on $\pi_0(\overline{X}')$
generated by the binary relation above.
Let us show that the map 
$\phi: \pi_0(\overline{X}')/\sim \to \pi_0(\overline{X})$
obtained from the diagram \eqref{dia:3}
is an isomorphism.

As the \'etale base change of a normalization,
$\overline{X}'\to \overline{X}$ is a normalization.
Hence $\pi_0(\overline{X}')$ coincides with the 
set of irreducible components of $\overline{X}$.
As a normalization is a surjective morphism,
the map $\phi$ 
is surjective.

Let $C_1, C_2$ be two distinct irreducible components of $\overline{X}$.
We claim that if $C_1\cap C_2 \neq \emptyset$
then the classes of $C_1$ and $C_2$ in $\pi_0(\overline{X}')/\sim$
coincide.
Let $y \in C_1 \cap C_2$.  
Then the local ring $\cO_{\overline{X},y}$ is not
an integral domain. 
Since we chose $Y$ so that $X\setminus Y$ is normal,
$y$ belongs to $\overline{Y}$.
One can take $y_1, y_2 \in \overline{X}'$ 
lying over $y$ such that $y_i$ 
lies in the same connected component as $C_i$ for each $i=1,2$.
Note that  $y_1, y_2 \in \overline{Y}'$ since
they both lie over $y \in \overline{Y}$.
Then using the definition of the equivalence relation
above for $y_1$ and $y_2$, we see that $C_1\sim C_2$.

Let $C_1'$ and $C_2'$ be two irreducible components
of $\overline{X}$.
It follows from the discussion above that
if they belong to the same connected component,
then $C_1' \sim C_2'$.
This implies the injectivity of $\phi$.
This proves the claim.
\end{proof}

We return to the proof of Lemma \ref{lem:red_to_normal}.
It follows from Lemma~\ref{rmk:cycle_class}
and Lemma~\ref{lem:et cartesian} that
the diagram
$$
\begin{CD}
H^\BM_{-1}(\Spec \cO(Y'),\Z(j)) 
@>>>  
H^\BM_{-1}(\Spec \cO(X'),\Z(j)) \\
@VVV @VVV \\
H^\BM_{-1}(\Spec \cO(Y),\Z(j)) 
@>>> 
H^\BM_{-1}(\Spec \cO(X),\Z(j)) \\
\end{CD}
$$
is cocartesian.   
We saw that $\gamma$ is surjective. 
Taking a lift and composing with $\beta \circ (\alpha_Y)^{-1}$
we obtain a map $H^\BM_{-1}(\Spec \cO(X),\Z(j)) \to H^\BM_{-1}(X, \Z(j))$.
The fact that the diagram above is cocartesian and some diagram chasing
imply that this map does not depend on the choice of a lift and this map 
is a homomorphism.  We then see that the homomorphism
$\beta$ factors through the homomorphism
$$
\begin{array}{l}
H^\BM_{-1}(Y,\Z(j)) 
\xto{\alpha_Y} 
H^\BM_{-1}(\Spec \cO(Y),\Z(j)) \\
\xto{\gamma}
H^\BM_{-1}(\Spec \cO(X),\Z(j)).
\end{array}
$$
This proves that the order of $H^\BM_{-1}(X,\Z(j))$ 
divides the order of
$$
H^\BM_{-1}(\Spec \cO(X),\Z(j)).
$$ 
Hence 
$\alpha_X$ is an isomorphism.
This completes the proof.
\end{proof}

\begin{lem}
\label{lem:not proper}
Let $U$ be a nonempty open subscheme of a separated connected scheme
$V$ over $\Spec \F_q$
such that $V \setminus U \neq \emptyset$.
Then $U$ is not proper over $\Spec \F_q$.
\end{lem}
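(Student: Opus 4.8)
The plan is to argue by contradiction. Suppose that $U$ were proper over $\Spec \F_q$. I would like to deduce that the open immersion $j : U \inj V$ is itself a proper morphism, and then exploit the fact that proper morphisms are closed in order to force the image of $U$ to be simultaneously open and closed in $V$.

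More precisely, I would first invoke the cancellation property for proper morphisms: if the composite $U \xto{j} V \to \Spec \F_q$ is proper and the structure morphism $V \to \Spec \F_q$ is separated, then $j$ is proper. Since $V$ is separated over $\Spec \F_q$ by hypothesis and $U$ is assumed proper, this applies and shows that $j$ is proper.

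Next, a proper morphism is universally closed, hence closed; therefore the image $j(U)$ is a closed subset of $V$. On the other hand, $j$ is an open immersion, so $j(U)$ is also open. Since $V$ is connected and $U$ is nonempty, the only subset of the underlying space $|V|$ that is at once nonempty, open, and closed is $|V|$ itself. Thus $j(U) = V$ as sets, and because $j$ is an open immersion this forces $U = V$ as schemes, contradicting the hypothesis $V \setminus U \neq \emptyset$. Hence $U$ cannot be proper over $\Spec \F_q$.

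The argument is entirely formal, so I do not anticipate a genuine obstacle. The only points requiring care are the correct invocation of the cancellation law for properness (the composition/base-change formalism for proper morphisms, of the kind already used in the proof of Lemma~\ref{lem:red_to_normal}), and the implicit observation that an open subscheme of a separated, finite-type $\F_q$-scheme is again separated and of finite type, so that the notion of properness retains its usual meaning for $U$.
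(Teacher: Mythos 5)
Your proof is correct and is essentially the paper's argument in packaged form: the paper tests universal closedness of $U \to \Spec \F_q$ directly against the base change $U \times_{\Spec \F_q} V \to V$, using the closed diagonal of the separated scheme $V$ to exhibit a closed subset whose image is the non-closed set $U$ --- which is precisely the content of the cancellation law you invoke to conclude that $j$ would be proper, hence closed. Both arguments then finish identically, noting that a nonempty open subset of a connected space with nonempty complement cannot be closed.
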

\begin{proof}
As $V$ is separated,
the diagonal $\Delta \subset V \times_{\Spec \F_q} V$
is closed,
hence the restriction 
$\Delta \cap (U \times_{\Spec \F_q} V) \subset
U \times_{\Spec \F_q} V$ is closed.
The image of this closed set under the second projection
$U \times_{\Spec \F_q} V \to V$ is $U$, hence it 
is not closed in $V$ since $V$ is connected.
This shows 
the structure map
$U \to \Spec \F_q$
is not universally closed, hence it is not proper.
\end{proof}

\begin{proof}[Proof of Theorem~\ref{prop:AppB_main}]
First suppose $d=1$. The claim for $X$ normal and non-proper
follows from Lemmas~\ref{lem:3,2} and~\ref{lem:q ge 2}.
Then the claim for $X$ proper follows from
Lemmas~\ref{lem:red_to_non_proper} and \ref{lem:red_to_normal}.

Let  us prove the claim for non-proper $X$.
We use induction on the number of irreducible 
components $n$ of $X$.
Suppose $n=1$.
We may without loss of generality assume
$X$ is reduced so that $X$ is integral.
Take an open immersion from $X$ to a 
connected scheme $X'$ of dimension one
which is proper over $\Spec \F_q$
such that the complement $X' \setminus X$ is zero dimensional.
Let $j \in \{-1,-2\}$. We have proved that the pushforward map 
$H^\BM_{-1}(X',\Z(j)) \to 
H^\BM_{-1}(\Spec \cO(X'),\Z(j))$
is an isomorphism. 
This implies, using Lemma~\ref{lem:norm_surj}, that the pushforward map
$H^\BM_{-1}(X' \setminus X, \Z(j)) \to 
H^\BM_{-1}(X',\Z(j))$ is surjective. Hence,
by the localization sequence, we have 
$H^\BM_{-1}(X, \Z(j)) =0$.

Suppose $n \ge 2$.
We take a (non-empty) zero dimensional closed subscheme $Y \subset X$
such that 
$X \setminus Y=X_1 \coprod \dots \coprod X_r$ (disjoint union of schemes) 
with
the following properties:
\begin{enumerate}
\item
$X_i$ is a connected one dimensional open subscheme of $X$,
\item
the number of irreducible components of $X_i$ is less than $n$,
\item the closure $\overline{X}_i$ of $X_i$ in $X$ equals $X_i \cup Y$,
\end{enumerate}
for each $1 \le i \le r$.

We can for example take as $Y$ the following scheme.
Let $Y_0$ be a zero dimensional subscheme of $X$
such that the complement $X \setminus Y_0$
is not connected.  We order the set of such $Y_0$'s by inclusion,
and let $Y$ be a minimal one with respect to this 
ordering.  Let us check the properties (1)(2)(3).
Let $\{X_i\}_{1 \le i \le r}$
be the set  of  connected components of $X \setminus Y$ 
then (1) holds true.
We have $r \ge 2$ by construction. 
Since the number of irreducible components of $X$
equals the sum of the number of irreducible components
of the $X_i$'s, the property (2) holds true.
The closure $\overline{X}_i$ of $X_i$ in $X$ is contained 
in $X_i \cup Y$ by construction.  Suppose 
$\overline{X}_i \neq X_i \cup Y$
for some $1\le i \le r$.
Let $y \in (X_i \cup Y) \setminus \overline{X}_i$.
Then the minimality condition on the construction of $Y$
implies that 
$X \setminus (Y \setminus \{y\})=(X_1 \coprod \cdots \coprod X_r) \cup \{y\}$
is connected.
This implies in particular that $y \in \overline{X}_i$,
which is a contradiction, so (3) holds true.

Taking $U=X_i$ and $V=\overline{X}_i$ in Lemma~\ref{lem:not proper},
we see that $X_i$ is not proper.
By the non-properness of $X$
(and changing the indexing)
we may suppose that $\overline{X}_1$ is not proper.
The localization sequence gives the exact sequence
\[
H_0^\BM(X_1,\Z(j))
\xto{\varphi}
H_{-1}^\BM(Y,\Z(j))
\to
H_{-1}^\BM(\overline{X}_1, \Z(j)).
\]
By the inductive hypothesis,
we have 
$H_{-1}^\BM(\overline{X}_1, \Z(j))=0$,
hence $\varphi$ is a surjection.  Now use the 
following localization sequence
\[
\begin{array}{l}
\displaystyle\bigoplus_{i=1}^r H_0^\BM(X_i, \Z(j))
\xto{\psi}
H_{-1}^\BM(Y, \Z(j))
\to
H_{-1}^\BM(X, \Z(j))
\\
\to
\displaystyle\bigoplus_{i=1}^r 
H_{-1}^\BM(X_i, \Z(j)).
\end{array}
\]
Since $\varphi$ is surjective, $\psi$ is surjective.
By the inductive hypothesis,
$\bigoplus_{i=1}^r 
H_{-1}^\BM(X_i, \Z(j))=0$.
It follows that
$H_{-1}^\BM(X, \Z(j))=0$.
The claim is proved in the case $d=1$.

Next suppose that $d \ge 2$ and $X$ is affine.
Let $j \in \{-1, -2\}$.
The localization sequence gives an exact sequence
$$
\begin{array}{l}
\varinjlim_Y H^\BM_{-1}(Y,\Z(j))
\to H^\BM_{-1}(X,\Z(j))  \\
\to \varinjlim_Y H^\BM_{-1}(X\setminus Y,\Z(j)),
\end{array}
$$
where $Y$ runs over the reduced closed subschemes
of $X$ of pure codimension one. For dimension reasons, we have 
$\varinjlim_Y H^\BM_{-1}(X\setminus Y,\Z(j))=0$.
Hence by induction on $d$, we have $H^\BM_{-1}(X,\Z(j))=0$.

Next suppose that $d \ge 2$ and $X$ is not proper.
Using a similar argument as above, 
we are reduced, by induction on the
number of irreducible components of $X$, 
to the case where $X$ is integral.
Take an open immersion from $X$ to a 
connected scheme $X'$ of dimension $d$,
which is proper over $\Spec \F_q$, such that $X$ is dense in $X'$.
Take a non-empty affine open subscheme $U \subset X$ and set
$Y=X' \setminus U$. 
Let us take an algebraic closure $\Fbar_q$ of $\F_q$.
By \cite[Chapter II, Proposition 3.1, p.~66]{Hartshorne}
and \cite[Chapter II, Proposition, p.~67]{Hartshorne} 
(originally due to \cite{Goodman}),
for each irreducible component $X''$ of 
$X' \times_{\Spec \, \F_q} \Spec \, \Fbar_q$, 
we know that $X''\setminus U \times_{\Spec \, \F_q} \Spec \, \Fbar_q$ 
is connected and is of pure codimension one in $X'$.
This shows that $Y$ is of 
pure codimension one in $X'$.

Let us show that $Y$ is connected.
Write $f \colon X '\times_{\Spec \F_q} \Spec \Fbar_q
\to X'$ for the canonical projection.
We note that the map $f$ is surjective, 
and, as the canonical morphism $\Spec \Fbar_q
\to \Spec \F_q$ is universally closed 
by \cite[Proposition (6.1.10)]{EGA2},
the map $f$ is a closed map.
Let $\xi \in X$ denote the generic point of $X$.
As $X$ is dense in $X'$, 
the closure of $\xi$ in $X'$ equals $X'$.
Take $\xi' \in f^{-1}(\xi)$ and let 
$X''$ be an irreducible component of 
$X' \times_{\Spec \F_q} \Spec \Fbar_q$
that contains $\xi'$.  
Using that an irreducible component is closed, 
we see that $X''$ contains the closure in 
$X' \times_{\Spec \F_q}\Spec \Fbar_q$ 
of $\xi'$.
Then as $f$ is a closed map,
the morphism $f|_{X''}:X'' \to X'$ is surjective.
Using the fact above by Goodman, 
we have that 
$X'' \setminus U\times_{\Spec \F_q} \Spec \Fbar_q$
is connected.
Then as $X'' \setminus (U \times_{\Spec \F_q} \Spec \Fbar_q)$
surjects onto $X' \setminus U=Y$,
we have that $Y$ is connected as the continuous image of a connected 
space.

Write $X\cap Y=Z_1 \coprod \dots \coprod Z_r$ so that each $Z_i$ is connected.
We claim that each $Z_i$ is not proper.
As $X \subset X'$ is an 
open subset, $X\cap Y \subset Y$ is an open subset of $Y$,
hence each $Z_i \subset Y$ is an open subset of $Y$.
As $Y$ is connected, $\overline{Z}^Y_i \neq Z_i$
where $\overline{Z}^Y_i$ denotes the closure of 
$Z_i$ in $Y$.   This implies that
$Z_i$ is the complement of a non-empty closed set,
namely $\overline{Z}^Y_i \setminus Z_i$,
of a connected proper scheme $\overline{Z}^Y_i$.
It follows from Lemma~\ref{lem:not proper} that 
$Z_i$ is not proper.

Let $j \in \{-1, -2\}$.
Since $U$ is affine, from the localization sequence 
$$
H^\BM_{-1}(Y \cap X, \Z(j))
\to H^\BM_{-1}(X,\Z(j))
\to H^\BM_{-1}(U,\Z(j)) 
$$
it follows by induction on $d$ that
$H^\BM_{-1}(X,\Z(j))$ is zero
(to remove the hypothesis that the schemes in the
localization sequence are quasi-projective, we refer to
\cite[Theorem 1.7, p.~301]{Levine2} and \cite[2.6, p.~60]{Ge-Le2}).
This proves the claim for $X$ not proper.

The claim for $X$ proper follows from
Lemmas~\ref{lem:red_to_non_proper} and \ref{lem:red_to_normal}.
This completes the proof.
\end{proof}

\section{Under Parshin's conjecture}
\label{sec:Parshin}
We assume Parshin's conjecture in this section
and draw some consequences.
Parshin's conjecture states that
for any projective smooth scheme $Z$ over a finite field,
$H_\cM^a(Z, \Q(b))=0$ unless $a=2b$.
We note that it is a theorem of Harder for curves 
(we refer to \cite[THEOREM~0.5, p.70]{Grayson} for the 
correct implication of Harder's result).
\begin{prop}
\label{prop:Parshin}
Assume that Parshin's conjecture holds.
Then
the statement in 
Theorem~\ref{prop:AppB_main} holds true for any $j \le -1$.
We also have  $H_i^\BM(X, \Z(j))=0$ for $i \le -2$ and $j \le -1$.
\end{prop}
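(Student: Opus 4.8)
The plan is to isolate the one new input supplied by Parshin's conjecture---the vanishing of rational Borel--Moore motivic homology in negative weights---and then to feed it into the machinery of Section~\ref{sec:proof main}. First I would prove that for every $W$ separated of finite type over $\Spec\F_q$ and every $j\le -1$ one has $H^\BM_i(W,\Q(j))=0$ for all $i$. When $W$ is smooth and projective of pure dimension $n$ this is immediate: the identification $H^\BM_i(W,\Q(j))=H_\cM^{2n-i}(W,\Q(n-j))$ for smooth $W$ and Parshin's conjecture force the group to vanish unless $2n-i=2(n-j)$, i.e. $i=2j$, and at $i=2j$ it equals $\CH^{n-j}(W)\otimes_\Z\Q$, which is zero because $n-j\ge n+1>\dim W$. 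To reach an arbitrary $W$ I would induct on $\dim W$, removing a lower-dimensional closed subscheme by the localization sequence and using a de Jong alteration to dominate an integral $W$ by a smooth projective one; since we work with $\Q$-coefficients the degree of the alteration is invertible, so $H^\BM_*(W,\Q(j))$ is controlled by the smooth projective case already treated.

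Granting this, the universal coefficient sequence attached to $0\to\Z\to\Q\to\Q/\Z\to 0$ has vanishing $\Q$-terms and collapses to canonical isomorphisms $H^\BM_i(W,\Z(j))\cong H^\BM_{i+1}(W,\Q/\Z(j))$ for all $i$ and all $j\le -1$. The key point is to apply this at a generic point. For $W$ integral of dimension $d\ge 2$ with function field $K$, continuity of higher Chow groups gives $\varinjlim_Y H^\BM_i(W\setminus Y,\Z(j))=H^\BM_i(\Spec K,\Z(j))$, where $Y$ runs over the reduced closed subschemes of pure codimension one; applying the collapse to the finite-type schemes $W\setminus Y$ and passing to the colimit, this equals $\varinjlim_Y H^\BM_{i+1}(W\setminus Y,\Q/\Z(j))=H^\BM_{i+1}(\Spec K,\Q/\Z(j))$, which the Geisser--Levine cycle map identifies with an \'etale cohomology group of $K$ in degree $2d-i-1\ge 2d>d+1=\mathrm{cd}(K)$ whenever $i\le -1$, hence it vanishes. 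This is exactly the vanishing of the colimit $\varinjlim_Y H^\BM_i(W\setminus Y,\Z(j))$ that, for $(i,j)\in\{-1\}\times\{-1,-2\}$, was available in Section~\ref{sec:proof main} by a bare dimension count; Parshin is what supplies it for all $i\le -1$ and $j\le -1$.

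With this generic vanishing the two assertions follow. For the first (the case $i=-1$) I would rerun the proof of Theorem~\ref{prop:AppB_main} verbatim: every auxiliary statement used there---Lemmas~\ref{rmk:cycle_class}, \ref{lem:norm_surj}, \ref{lem:normal_gcd}, \ref{lem:sets} and \ref{lem:et cartesian}, and the reduction Lemmas~\ref{lem:red_to_non_proper} and \ref{lem:red_to_normal}---is valid, with the same proof, for every $j\le -1$, and the order identity $\gcd_{x}(|\kappa(x)|^{-j}-1)=|\cO(X_\red)|^{-j}-1$ goes through unchanged; the sole place where the special values $j\in\{-1,-2\}$ were used is the colimit vanishing just established. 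For the second assertion I would induct on $\dim X$: reducing to $X$ reduced and then integral (treating connected components separately), the localization sequence over codimension-one $Y$ combined with the generic vanishing of the previous paragraph and the inductive hypothesis $H^\BM_i(Y,\Z(j))=0$ for the $(d-1)$-dimensional $Y$ forces $H^\BM_i(X,\Z(j))=0$ for all $i\le -2$. The base cases are $d=0$, where $H^\BM_i(\Spec\F',\Z(j))=H_\cM^{-i}(\F',\Z(-j))=0$ for $i\le -2$ by Quillen, and $d=1$, handled by the weight and cohomological-dimension computation of Lemma~\ref{lem:q ge 2}.

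The main obstacle is the first paragraph: extracting $H^\BM_*(W,\Q(j))=0$ for \emph{all} finite-type $W$ from Parshin's conjecture, which is only postulated for smooth projective schemes. The smooth projective case is formal, but the dévissage to singular and non-proper $W$ rests on de Jong alterations together with a delicate localization induction, and this is where the genuine use of Parshin (beyond Harder's theorem for curves) is concentrated. A secondary technical point is the identification of $H^\BM_*(W,\Q/\Z(j))$ with \'etale Borel--Moore homology for possibly singular $W$ (Remark~\ref{rmk:BMetale}), together with the cohomological-dimension bound guaranteeing its vanishing in the range $i\le -2$.
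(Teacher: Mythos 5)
Your proposal is correct and follows essentially the same route as the paper: the rational vanishing you extract from Parshin via de Jong alterations is exactly the content of the cited result of Geisser (\cite[Theorem 4.7 ii)]{Geisser2}) that the paper invokes, after which both arguments collapse the universal coefficient sequence to $\Q/\Z$-coefficients, apply Geisser--Levine together with cohomological-dimension and weight bounds to kill the colimit over open subschemes, rerun the proof of Theorem~\ref{prop:AppB_main} for $i=-1$, and induct on dimension via localization for $i\le -2$. The only cosmetic difference is that you work at the generic point while the paper works with smooth affine opens, and your low-dimensional base cases ($d\le 1$, where the inequality $2d>d+1$ fails) are correctly deferred to the weight arguments of Lemma~\ref{lem:q ge 2}, as in the paper.
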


We begin with a lemma.
\begin{lem}
\label{lem:vanishing}
\begin{enumerate}
\item 
Let $U$ be a connected scheme of pure dimension $d\ge 1$
over $\F_q$.  
Then
$\varinjlim_V H_i^\BM(V, \Z(j))=0$, 
where $V$ runs over the (non-empty) open subschemes 
of $U$,
for $i \le -1$ and $j\le -1$ assuming Parshin's conjecture.
\item
Let $V$ be a zero dimensional scheme over $\F_q$.
Then
$H_i^\BM(V, \Z(j))=0$
for $i \le -2$ and $j \le -1$ assuming Parshin's conjecture.
\end{enumerate}
\end{lem}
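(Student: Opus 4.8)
The plan is to reduce both statements to the vanishing of motivic cohomology of fields, to treat the rational part with Parshin's conjecture and the torsion part with \'etale and logarithmic methods, and then to recombine by universal coefficients.

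First I would set up the reduction for (1). Since $H^\BM_\bullet(-,\Z(j))$ depends only on the reduced scheme, I may assume $U$ reduced. Using the localization sequence to discard the lower-dimensional locus where distinct irreducible components meet (together with the non-normal locus), a cofinal system of open $V$ decomposes as a disjoint union of opens lying in distinct irreducible components, so the colimit splits and I may assume $U$ integral with function field $F$ of transcendence degree $d\ge 1$. Over the perfect field $\F_q$ the regular locus is open dense, hence the smooth $V$ are cofinal, and for such $V$ one has $H^\BM_i(V,\Z(j))=\CH^{d-j}(V,i-2j)=H^{2d-i}_\cM(V,\Z(d-j))$. By continuity of higher Chow groups the colimit equals $H^a_\cM(\Spec F,\Z(b))$ with $a=2d-i$ and $b=d-j$. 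Writing $n=i-2j$, the group vanishes for trivial reasons unless $i\ge j$, in which case $n\ge -j\ge 1$ and $a=2b-n\neq 2b$. Thus it remains to prove $H^a_\cM(\Spec F,\Z(b))=0$ for $F$ of transcendence degree $d\ge 1$, with $a=2b-n$, $n\ge 1$ and $b=d-j\ge d+1$.

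From the sequence attached to $0\to\Z\to\Q\to\Q/\Z\to 0$ it suffices to kill $H^a_\cM(F,\Q(b))$ and $H^{a-1}_\cM(F,\Q/\Z(b))$. For the rational term, since $a\neq 2b$, I would invoke the standard consequence of Parshin's conjecture that $H^a_\cM(V,\Q(b))=0$ for every smooth $V/\F_q$ whenever $a\neq 2b$, and pass to the colimit. \emph{This is where Parshin is genuinely used, and it is the main obstacle.} Parshin's conjecture is stated only for smooth projective schemes, and extending the concentration in degree $a=2b$ to an arbitrary smooth (in particular affine) $V$ requires de Jong's theorem on alterations — to produce, after a finite extension of the base, a smooth projective model dominating a compactification of $V$ — together with a transfer argument with $\Q$-coefficients and an induction on $d$ through the localization sequence, the lower-dimensional strata being controlled by the inductive hypothesis.

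For the torsion term $H^{a-1}_\cM(F,\Q/\Z(b))$ I would argue prime to $p$ and $p$-locally. By Beilinson--Lichtenbaum (now a theorem via Bloch--Kato--Rost--Voevodsky) and \cite{Ge-Le2}, the prime-to-$p$ part is either above the weight of the field $F$, where it vanishes, or isomorphic to $H^{a-1}_\etale(F,\Q_\ell/\Z_\ell(b))$, which vanishes since $a-1\ge 2d$ exceeds the \'etale cohomological dimension $d+1$ of $F$ once $d\ge 2$. The remaining boundary case $d=1$, $a-1=2$ is exactly the situation treated by the weight argument of Lemma~\ref{lem:q ge 2}: the colimit runs over affine curves, on which $H^2_\etale$ with a nonzero Tate twist vanishes. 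The $p$-part is, by \cite{Ge-Le1}, governed by the logarithmic de Rham--Witt sheaf $\nu^b_r$, and $\nu^b_r(F)=0$ because $b\ge d+1>d=\mathrm{trdeg}\,F$ forces $\Omega^b_F=0$. Hence both universal-coefficient terms vanish, giving $H^a_\cM(\Spec F,\Z(b))=0$ and proving (1). Finally, for (2) the scheme $V$ is zero-dimensional, so $V_\red$ is a filtered union of spectra of algebraic extensions of $\F_q$ and $H^\BM_i(V,\Z(j))=H^{-i}_\cM(V_\red,\Z(-j))$; since $-i\ge 2$ this vanishes by Quillen's computation \cite{QuFinite}, which shows the motivic cohomology of a finite field is concentrated in degrees $0$ and $1$, so Parshin's conjecture is not actually needed here.
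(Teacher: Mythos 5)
Your argument is essentially the paper's: both proofs reduce the statement to the rational vanishing supplied by Parshin's conjecture via de Jong's alterations (this is precisely \cite[Theorem 4.7 ii), p.312]{Geisser2}, which the paper cites), and then kill the torsion using Geisser--Levine, the \'etale cohomological dimension bound $d+1$, the weight argument for the boundary case $d=1$, $i=-1$, and Quillen's computation in dimension zero (where, as you correctly note, Parshin is not actually needed). The differences are minor: you pass to the function field $F$ and use the degree bound $H^m_\cM(F,A(b))=0$ for $m>b$ to cover the range outside Beilinson--Lichtenbaum, whereas the paper disposes of the case $d>i-j$ by a dimension count on $\CH^{d-j}(\Spec K,i-2j)$ and otherwise works directly with affine smooth opens $V$. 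One caveat: the ``standard consequence'' you invoke, namely that $H^a_\cM(V,\Q(b))=0$ for \emph{every} smooth $V/\F_q$ whenever $a\neq 2b$, is false as stated (e.g.\ $H^1_\cM(\Gm,\Q(1))=\cO(\Gm)^\times\otimes_\Z\Q\neq 0$); the correct consequence of Parshin's conjecture, and the only instance you actually use, is the vanishing in negative weights, i.e.\ $H^\BM_i(V,\Q(j))=0$ for $j\le -1$ (equivalently $H^a_\cM(V,\Q(b))=0$ for $b>\dim V$), which is exactly what \cite[Theorem 4.7 ii), p.312]{Geisser2} provides. With that statement corrected, your proof goes through.
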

\begin{proof}
Let $K$ denote the function field of $U$.
If $d>i-j$, then
$H_i^\BM(V, \Z(j))=\CH_j(V, i-2j)=\CH^{d-j}(V, i-2j)$.
So the limit is $\CH^{d-j}(\Spec K, i-2j)$, which equals zero 
by dimension reason.

Suppose $d\le i-j$.
Let $V$ be an open smooth subscheme of $U$.
We proceed as in the proof of Lemma~\ref{lem:q ge 2}.
We have
\[
H_i^\BM(V, \Z(j))=H_\cM^{2d-i}(V, \Z(d-j))
=H_\cM^{2d-i-1}(V, \Q/\Z(d-j)),
\]
where the second equality follows 
from \cite[Theorem 4.7 ii), p.312]{Geisser2}
(this uses Parshin's conjecture).
Since we are in the range $d \le i-j$,
we can use \cite[Corollary~1.2, p.56]{Ge-Le2} 
and \cite[Theorem~8.4, p.491]{Ge-Le1} 
to see that the quantity above is isomorphic to
$\bigoplus_{\ell\neq p}H_{\mathrm{et}}^{2d-i-1}(V, \Q_\ell/\Z_\ell(d-j))
\oplus \varinjlim_{r}H^{d+j-i-1}(X_\mathrm{Zar}, \nu_r^{d-j})$.
The $p$-part is zero since we are in the range $d \le i-j$.

We may assume that $V$ is affine.
Then $H_{\mathrm{et}}^{2d-i-1}(V, \Q_\ell/\Z_\ell(d-j))=0$
for $d-3 \ge i$ since the cohomological dimension 
of $V$ is $d+1$ 
(\cite[Expos\'e XIV, Th\'eor\`eme~3.1, p.15]{SGA4}).
Suppose $d\ge 2$.  Then the claim follows from 
this immediately since $i \le -1$.
Suppose $d=1$.
The vanishing  
$H^2_\mathrm{et}(V, \Q_\ell/\Z_\ell(1-j))=0$
can be shown using the same method as 
in the proof of Lemma~\ref{lem:q ge 2}.
This proves (1).
Let $V$ be as in (2).
The remaining case is $i=-2$.
We proceed as in the proof of Lemma~\ref{lem:q ge 2}.
We have an exact sequence
\[
H_\mathrm{et}^1(V, \Q_\ell(-j)) 
\to H_\mathrm{et}^1(V, \Q_\ell/\Z_\ell(-j))
\to
H_\mathrm{et}^2(V, \Z_\ell(-j)).
\]
The third term is zero since $V$ is zero dimensional.
We use the Hochschild-Serre spectral sequence
as before.  We have $E_2^{0,1}=E_2^{1,0}=0$
using the weight argument.
The claim then follows.
This completes the proof.
\end{proof}

\begin{proof}[Proof of Proposition~\ref{prop:Parshin}]
Suppose $i=-1$.
For the proof of Theorem~\ref{prop:AppB_main} to work 
for general $i$, we need as an input 
the vanishing of 
$\varinjlim_Y H_{-1}^\BM(X \setminus Y, \Z(j))$
(the notation as in the proof of Theorem~\ref{prop:AppB_main}), 
and this is all that we need.
As we have seen in Lemma~\ref{lem:vanishing}
above, the vanishing holds under Parshin's conjecture,
hence the claim follows if $i=-1$.

Suppose $i \le -2$.  We show by induction on the dimension $d$
that $H_i^\BM(X, \Z(j))=0$.
The case $d=0$ is Lemma~\ref{lem:vanishing}(2).
Consider the exact sequence
\[
\varinjlim_Y H_i^\BM(Y, \Z(j))
\to
H_i^\BM(X, \Z(j))
\to
\varinjlim_Y H_i^\BM(X\setminus Y, \Z(j))
\]
where $Y$ runs over closed subschemes of $X$.
By the inductive hypothesis, 
the first term is zero,
and the third term is zero by Lemma~\ref{lem:vanishing}(1).
\end{proof}

For fixed $(i,j)$, we only need to
assume Parshin's conjecture for projective smooth schemes of dimension 
(less than or equal to) $i-j$.
Theorem~\ref{prop:AppB_main} treats the cases $(i,j)=(-1,-1)$ and $(-1, -2)$.
Hence we can use Harder's result and need not assume Parshin's conjecture.

\begin{rmk}\normalfont
\label{rmk:BMetale}
For this remark, 
we do not use
Parshin's conjecture.
Let us define and compute the 
\'etale Borel-Moore (not motivic) homology groups 
for a scheme $X$ separated and of finite type over $\F_q$
in the same range as that of Proposition~\ref{prop:Parshin}.
We will see that the \'etale Borel-Moore homology groups
and the Borel-Moore motivic homology groups
are isomorphic in this range.
Let $\ell$ be a prime number prime to $p$.
We define the \'etale Borel-Moore homology group
to be
\[
H_i^{\BM,\mathrm{et}}(X, \Z/\ell^n(j)) =
H_{\mathrm{et}}^{-i}(X, Rf^!(\Z/\ell^n(-j)))
\]
for $i,j\in \Z$ and $n \ge 1$.
Since this is isomorphic to
\[
\Hom_{\Z/\ell^n}
(H_{\mathrm{et}, c}^{i+1}
(X, \Z/\ell^n(-j)), \Z/\ell^n),
\]
we set
\[
H_i^{\BM,\mathrm{et}}(X, \Z_\ell(j))
=\Hom_{\Q_\ell/\Z_\ell}
(H_{\mathrm{et},c}^{i+1}(X, \Q_\ell/\Z_\ell(-j)), \Q_\ell/\Z_\ell).
\]
Then it is easy to see that a statement similar to the one in Proposition~\ref{prop:Parshin}
holds for \'etale Borel-Moore (not motivic) 
homology groups with $\Z_\ell$-coefficient.
Namely,
we have $H_i^{\BM,\mathrm{et}}(X, \Z_\ell(j))=0$ for $i \le -2$,
and, for $X$ connected and for $i=-1$, 
the pushforward  by the structure morphism is an isomorphism if $X$
is proper, and $H_{-1}^{\BM,\mathrm{et}}(X, \Z_\ell(j))=0$ otherwise.
\end{rmk}

\end{document}